\documentclass[12pt,a4paper]{article}
\usepackage[utf8]{inputenc}
\usepackage{amsmath, amssymb, amsthm, verbatim, hyperref}
\usepackage{mathrsfs}
\usepackage[dvipsnames]{xcolor}
\usepackage{ragged2e}
\usepackage{marginnote}
\usepackage{enumerate}
\usepackage{makecell}
\usepackage{longtable}
\usepackage{epsfig}
\usepackage{tikz}
\usepackage{ytableau}
\usepackage{hyperref}
\usepackage{caption}
\usepackage{subcaption}
\hypersetup{hidelinks}
\usepackage{fancyvrb}
\usepackage{tikz-cd}
\usepackage[left=2.50cm, right=2.50cm, top=2.00cm, bottom=2.00cm]{geometry}
\usetikzlibrary {arrows.meta}
\usepackage{ifthen}

\newcommand{\cross}[3]{
    \draw[thick] (-0.5+#1, -#2) to (0.5+#1, -#2+1);
    \draw[thick] (-0.5+#1, -#2+1) to (0.5+#1, -#2);
    \foreach \j in {0,...,#3}{
    \ifthenelse{\equal{\j}{\the\numexpr #2-1} \OR \j = #2}{}{
    \draw[thick] (-0.5+#1,-\j ) to (0.5+#1,-\j);}
    }
}

\newcommand{\uncross}[3]{
    \draw[thick] [bend right = 90, looseness=1.25] (-0.5+#1, -#2) to (-0.5+#1, -#2+1);
    \draw[thick] [bend right = 90, looseness=1.25] (0.5+#1, -#2+1) to (0.5+#1, -#2);
    \foreach \j in {0,...,#3}{
    \ifthenelse{\equal{\j}{\the\numexpr #2-1} \OR \j = #2}{}{
    \draw[thick] (-0.5+#1,-\j ) to (0.5+#1,-\j);}
    }
}

\def\nodd{[n]_{\text{odd}}}
\def\Cov{\textsf{Cov}}
\def\Imm{\operatorname{Imm}}
\def\TL{\operatorname{TL}}

\definecolor{goodgreen}{rgb}{0.01, 0.75, 0.24}

\theoremstyle{plain}

\newtheorem{thm}{Theorem}[section]
\newtheorem{prop}[thm]{Proposition}
\newtheorem{cor}[thm]{Corollary}
\newtheorem{lemma}[thm]{Lemma}
\newtheorem{conjecture}[thm]{Conjecture}

\theoremstyle{definition}
\newtheorem{definition}[thm]{Definition}
\newtheorem{example}[thm]{Example}
\theoremstyle{remark}

\newtheorem{remark}[thm]{Remark}

\newcommand{\ZZ}{\mathbb{Z}}

\newcommand{\CC}{\mathbb{C}}

\newcommand{\NN}{\mathbb{N}}

\title{\vspace{-1em} 
Temperley--Lieb Immanants of Ribbon Decomposition Matrices}
\author{\vspace{-2em} \\
Son Nguyen, Pavlo Pylyavskyy}
\date{\vspace{-3em}}

\allowdisplaybreaks

\begin{document}
\ytableausetup{centertableaux}

\maketitle

    \begin{abstract}
        Ribbon decomposition matrices give determinantal formulas for skew Schur functions that include as special cases the classical Jacobi–Trudi, Giambelli, and Lascoux–Pragacz formulas.
We prove that certain elements of Lusztig’s dual canonical basis, called Temperley–Lieb immanants, are Schur-positive when evaluated on ribbon decomposition matrices.
We conjecture that this positivity holds for all elements of the dual canonical basis. This is known in the special case of Jacobi–Trudi matrices by a result of Haiman.
    \end{abstract}

\tableofcontents


\section{Introduction}

    Since Lusztig's groundbreaking 1990 paper \cite{lusztig1990canonical}, \textit{canonical bases} and \textit{dual canonical bases} have inspired several major developments in mathematics, such as Kashiwara crystal bases \cite{kashiwara1991crystal, kashiwara1993global} and Fomin--Zelevinsky cluster algebras \cite{fomin2002cluster, berenstein1996parametrizations}. In type A, dual canonical bases are certain polynomials in the matrix algebra whose coefficients are given as evaluations of Kazhdan--Lusztig polynomials, see \cite{du1992canonical, du1995canonical,rhoades2006kazhdan, skandera2008dual}. For example, the determinant of the whole matrix is an element of dual canonical basis.

    In type A, dual canonical bases have a deep Schur positivity property. It is well-known that determinant of a Jacobi--Trudi matrix is a skew Schur function and hence is Schur positive. It turns out that, like determinant, all elements of dual canonical bases evaluate to Schur positive results on generalized Jacobi--Trudi matrices. This was proved by Rhoades--Skandera in \cite[Proposition 3]{rhoades2006kazhdan}, relying on Haiman's result \cite[Theorem 1.5]{haiman1993hecke}, which in turn relies on the (proofs of) Kazhdan--Lusztig conjecture by Beilinson--Bernstein \cite{BB} and Brylinski--Kashiwara \cite{brylinski1981kazhdan}. No direct combinatorial proof of this property is known. However, for a subset of dual canonical bases, called \textit{Temperley--Lieb immanants}, introduced by Rhoades--Skandera in \cite{rhoades2005temperley}, Nguyen--Pylyavskyy \cite{nguyen2025temperley} gave a combinatorial proof using \textit{shuffle tableaux}.

    Besides the Jacobi--Trudi identity, many determinantal formulas for skew Schur functions are known. Most well-known examples include the dual Jacobi--Trudi identity, Giambelli formula, and Lascoux--Pragacz formula \cite{lascoux1988ribbon}. Here, we first introduce \textit{ribbon decomposition matrices} (see Definition \ref{def:ribbon-mat}), which specialize to all examples above. Using the technology of shuffle tableaux, we prove that their Temperley--Lieb immannants are also Schur positive.

    \begin{thm}\label{thm:main-thm}
        Given an infinite ribbon $R$, a skew shape $\lambda/\mu$ compatible with $R$. All Temperley--Lieb immannants of the ribbon decomposition matrix $A_{\lambda/\mu,R}$ are Schur positive.
    \end{thm}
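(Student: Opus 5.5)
We follow the route of \cite{nguyen2025temperley} for Jacobi--Trudi matrices and transport it to ribbon decomposition matrices via a Lindstr\"om--Gessel--Viennot (LGV) model. The first step is to realize $A_{\lambda/\mu,R}$ as a path matrix. Each entry of a ribbon decomposition matrix is a ribbon Schur function $s_{\theta_{ij}}$, where $\theta_{ij}$ is a piece of the infinite ribbon $R$ cut out between the positions determined by the $i$-th and $j$-th ribbons of the decomposition of $\lambda/\mu$. We therefore construct a planar directed graph $G$ with sources $a_1,\dots,a_n$ and sinks $b_1,\dots,b_n$. The weight generating function of paths $a_i\to b_j$ should be exactly $(A_{\lambda/\mu,R})_{ij}$. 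Following the Hamel--Goulden lattice-path proof of the Lascoux--Pragacz formula, we would take the paths to be ``outside decomposition'' lattice paths: horizontal steps carry $x$-weights as in the Jacobi--Trudi case, while vertical (or diagonal) steps follow the shape of $R$ and carry strict or weak conditions according to the direction of $R$. Compatibility of $\lambda/\mu$ with $R$ is precisely what makes the sources and sinks arranged so that the graph is planar and the path families are noncrossing in the required order.

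Given such a planar network, the Temperley--Lieb immanant $\Imm_\tau(A)$ for a $321$-avoiding/TL basis element $\tau$ is a positive combination of path families. By Rhoades--Skandera, it counts pairs of path families $(\pi,\pi')$, or equivalently a single family together with its crossing pattern, whose induced Temperley--Lieb diagram (obtained by resolving intersections) equals $\tau$. This is the standard statement that $\Imm_\tau$ of a totally nonnegative planar network matrix counts path families of ``type $\tau$''. We apply it here with $x$-monomial weights.

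The core step is to show that the generating function of families of type $\tau$ is Schur positive. In \cite{nguyen2025temperley} this is done by encoding such families as shuffle tableaux and constructing a weight-preserving bijection to (pairs of) semistandard tableaux, or by exhibiting a Bender--Knuth/crystal-compatible structure, so that the sum is a sum of Schur functions. Our plan is to map each path family in $G$ to a shuffle tableau of an appropriate shape by reading off the cells of the ribbons traversed. The lemma to prove is that the TL-type of a family in $G$ agrees with the TL-type defined combinatorially on the associated shuffle tableau in the Jacobi--Trudi setting. Once this holds, Schur positivity of each type class follows from the earlier results on shuffle tableaux.

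We expect the main obstacle to be this compatibility: ribbon decomposition paths mix row-type (weak) and column-type (strict) steps. Consequently, the local crossing and touching behaviour that determines the TL resolution differs from the pure $h$-path case. We would first try to reduce to the Jacobi--Trudi case by a flattening bijection that straightens each ribbon segment into a row. This would identify families in $G$ with Jacobi--Trudi families for a larger auxiliary skew shape, in a way that preserves intersection patterns up to the fixed ribbon cells. We would then check locally, at each corner of $R$, that crossings are preserved. If the local checks fail, the fallback is to define the crystal operators directly on the ribbon path families and verify that they commute with the TL-type.
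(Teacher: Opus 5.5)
Your overall plan matches the paper's: an LGV network, TL immanants as generating functions of colored path families of fixed type, passage to shuffle tableaux, then the crystals of \cite{nguyen2025temperley}. But the step that carries the real content is left open, and the route you propose for it fails.

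A ``flattening'' of ribbon segments into rows cannot be weight-preserving. Vertical steps of $R$ impose strict inequalities: for $R$ an infinite column the matrix entries are $e_k$'s, so no bijection with Jacobi--Trudi families (entries $h_k$) can preserve $x$-weights. The Jacobi--Trudi TL-type reading is also the wrong one here. $Q_k$ must sit below or to the left of the south-west end of the $k$-th ribbon, and $P_k$ above or to the right of its north-east end, according to the direction in which $R$ continues past that end. Finally, as written your argument uses nothing beyond ``planar network with monomial weights'', and that cannot suffice. Hamel--Goulden matrices come from exactly such networks, yet some of them have TL immanants that are not Schur positive (the remark in the introduction).

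The missing idea is that no reduction to Jacobi--Trudi is needed.
\begin{enumerate}
\item Take $I=\nodd$. Every TL basis element is then compatible with $(I,I)$, so $\Delta_{I,I}(A)\Delta_{\bar I,\bar I}(A)=\sum_\tau\Imm^{\TL}_\tau(A)$, and each $\Imm^{\TL}_\tau$ is a sum over colored covers of type $\tau$.
\item The red and blue families are noncrossing families for the principal submatrices on $I$ and $\bar I$. By Proposition~\ref{prop:minors}, these are ribbon decomposition matrices of genuine skew shapes $\lambda_R/\mu_R$ and $\lambda_B/\mu_B$. This closure under principal minors is the ribbon-specific ingredient, and it is exactly what fails for general Hamel--Goulden matrices.
\item Hence colored covers correspond weight-preservingly to pairs of SSYT, i.e.\ to shuffle tableaux of shape $(\lambda_R/\mu_R)\circledast(\lambda_B/\mu_B)$. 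These are literally the objects on which the crystal operators of \cite{nguyen2025temperley} act, with the geometry of $R$ already absorbed into the row and column conditions.
\item It remains to check $\psi(x)=\psi(T)$ with the adapted endpoint placement. Type preservation and the type A crystal structure from \cite{nguyen2025temperley} then show that each $\Imm^{\TL}_\tau(A_{\lambda/\mu,R})$ is a sum of crystal characters, hence Schur positive.
\end{enumerate}
Your fallback of building crystal operators directly on ribbon path families is therefore unnecessary.
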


    We conjecture that all Kazhdan--Lusztig immannants of ribbon decomposition matrices are also Schur positive.

    \begin{conjecture}\label{conj:KL}
        Given an infinite ribbon $R$, a skew shape $\lambda/\mu$ compatible with $R$. All Kazhdan--Lusztig immannants of the ribbon decomposition matrix $A_{\lambda/\mu,R}$ are Schur positive.
    \end{conjecture}

    It seems to us that proving Conjecture \ref{conj:KL} would require even deeper tools than Haiman's proof for Jacobi--Trudi matrices.

    \begin{remark}
        Our ribbon decomposition matrices are special cases of Hamel--Goulden matrices \cite{hamel1995planar}. It is natural to ask whether Kazhdan--Lusztig immanants of Hamel--Goulden matrices are Schur positive. However, it turns out that some Hamel--Goulden matrices do not have Schur positive Temperley--Lieb immanants. For example, for the following matrix from \cite[Figure 6]{hamel1995planar}
        \[
        \left(\begin{matrix}
                \scalebox{0.5}{
                \begin{ytableau}
                    {} \\
                    {}
                \end{ytableau}} & 
                \scalebox{0.5}{
                \begin{ytableau}
                     \none & \\
                     & \\
                \end{ytableau}} & 
                \scalebox{0.5}{
                \begin{ytableau}
                     {}
                \end{ytableau}} & 
                \scalebox{0.5}{
                \begin{ytableau}
                     \none & \none & \\
                      & & \\
                      {}
                \end{ytableau}} \\[0.5cm]
                \scalebox{0.5}{
                \begin{ytableau}
                    {} & {} & {} \\
                    {} \\
                    {} 
                \end{ytableau}} & 
                \scalebox{0.5}{
                \begin{ytableau}
                    \none & & & \\
                     \none & \\
                     {} & {}
                \end{ytableau}} & 
                \scalebox{0.5}{
                \begin{ytableau}
                    {} & {} & {} \\
                    {}
                \end{ytableau}} & 
                \scalebox{0.5}{
                \begin{ytableau}
                    \none & \none & & & \\
                    \none & \none & \\
                     {} & {} & {} \\
                     {}
                \end{ytableau}} \\
                \scalebox{0.5}{
                \begin{ytableau}
                    {} \\
                    {} \\
                    {}
                \end{ytableau}} & 
                \scalebox{0.5}{
                \begin{ytableau}
                    \none & \\
                    \none & \\
                     & \\
                \end{ytableau}} & 
                \scalebox{0.5}{
                \begin{ytableau}
                    {} \\
                    {}
                \end{ytableau}} & 
                \scalebox{0.5}{
                \begin{ytableau}
                    \none & \none & \\
                    \none & \none & \\
                     {} & {} & {} \\
                     {}
                \end{ytableau}} \\[0.7cm]
                1 & 
                \scalebox{0.5}{
                \begin{ytableau}
                    
                     {}
                \end{ytableau}} & 
                0 & 
                \scalebox{0.5}{
                \begin{ytableau}
                    {} & {} \\
                    {}
                \end{ytableau}}
            \end{matrix} \right),
        \]
        its Kazhdan--Lusztig immanant $\Imm_{2143}$, which is also a Temperley--Lieb immanant, is not Schur positive.
    \end{remark}

\section{Ribbon decomposition}\label{sec:ribbon}

\subsection{Skew shapes and ribbons}

    A \textit{partition} is a sequence $\lambda = (\lambda_1,\lambda_2,\ldots,\lambda_\ell)$ of weakly decreasing nonnegative integers. The \textit{size} of $\lambda$ is $|\lambda| = \lambda_1 + \cdots + \lambda_\ell$. The \textit{Young diagram} of $\lambda$ is an array of left-justified square cells, where the $i$-th row from top to bottom contains $\lambda_i$ cells. We index a box in row $i$ column $j$ as $(i,j)$. For partitions $\lambda,\mu$, we say $\mu \subseteq \lambda$ is $\mu_i\leq \lambda_i$ for all $i$. The \textit{skew shape} $\lambda/\mu$ is the set theoretic difference $\lambda - \mu$ of their Young diagrams.

    A \textit{semistandard Young tableau} (SSYT) $T$ of shape $\lambda/\mu$ is a filling of its skew Young diagram with positive integers such that the numbers are weakly increasing along each row and strictly increasing along each column. The weight of a SSYT T, denoted $\operatorname{wt}(T)$, is $\alpha = (\alpha_1,\alpha_2,\ldots)$, where $\alpha_i$ is the number of $i$'s in $T$. The \textit{skew Schur function} is defined to be
    \[ s_{\lambda/\mu} = \sum_{\text{SSYT $T$ of shape $\lambda/\mu$}}x^{\operatorname{wt}(T)}, \]
    where $x^{\alpha} = x_1^{\alpha_1}x_2^{\alpha_1}\cdots$. When $\mu = \emptyset$, the function $s_{\lambda}$ is a \textit{Schur function}. Schur functions form a basis for the ring of symmetric function. A symmetric function is \textit{Schur positive} if it is a positive sum of Schur functions.

    A \textit{ribbon} is a connected skew shape with no $2\times 2$ block. An \textit{infinite ribbon} is a ribbon with extra $1\times\infty$ or $\infty\times 1$ blocks at the beginning and the end. Let $R$ be an infinite ribbon, we can tile the plane with infinitely many copies of $R$. A skew shape $\lambda/\mu$ is \textit{compatible} with $R$ is the intersection of $\lambda/\mu$ with each copy of $R$ is contiguous.

    \begin{example}\label{ex:ribbon-compatible}

        Figure \ref{subfig:ribbon} shows an infinite ribbon $R$ with an $\infty\times 1$ block at the beginning and a $1 \times\infty$ block at the end. Figure \ref{subfig:ribbon-tiling} shows the tiling by infinitely many copies of $R$.

        \begin{figure}[h!]
         \centering
            \begin{subfigure}[c]{0.45\textwidth}
                \centering
                \includegraphics[scale = 1]{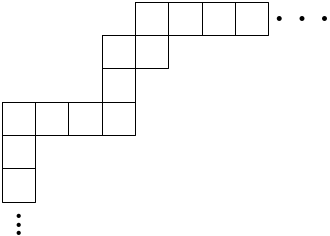}
                \caption{An infinite ribbon}
                \label{subfig:ribbon}
            \end{subfigure}
         \quad
            \begin{subfigure}[c]{0.45\textwidth}
                \centering
                \includegraphics[scale = 1]{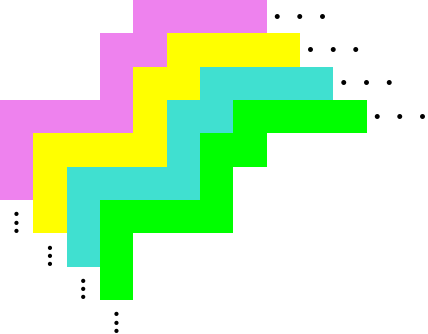}
                \caption{Tiling}
                \label{subfig:ribbon-tiling}
            \end{subfigure}
    
            \caption{}
            \label{fig:ribbon}
        \end{figure}

        Figure \ref{subfig:ribbon-compatible} shows a skew shape that is compatible with $R$. The skew shape in Figure \ref{subfig:ribbon-not-compatible}, on the other hand, is not compatible since the intersection with the yellow copy is not contiguous.

        \begin{figure}[h!]
         \centering
            \begin{subfigure}[c]{0.45\textwidth}
                \centering
                \includegraphics[scale = 1]{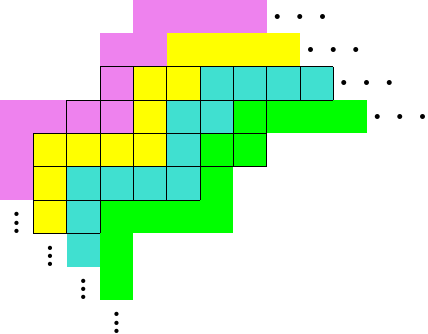}
                \caption{Example}
                \label{subfig:ribbon-compatible}
            \end{subfigure}
         \quad
            \begin{subfigure}[c]{0.45\textwidth}
                \centering
                \includegraphics[scale = 1]{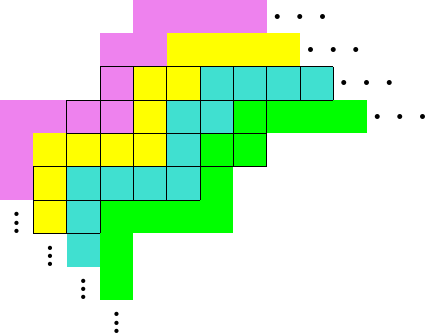}
                \caption{Non-example}
                \label{subfig:ribbon-not-compatible}
            \end{subfigure}
    
            \caption{}
            \label{fig:ribbon-examples}
        \end{figure}
    \end{example}

\subsection{Ribbon decomposition matrices}

    Now we define ribbon decomposition matrices. They are special cases of Hamel--Goulden matrices in \cite{hamel1995planar}.

    Given a square $(i,j)\in \ZZ^2$, its \textit{content} is $j-i$. Here, the $i$-coordinate increases downward while the $j$-coordinate increases rightward. Every infinite ribbon $R$ has exactly one box of each content. Thus, we can denote $[a,b]_R$, for $a,b\in \NN$, to be the section of the ribbon between content $a$ and $b$ inclusive. We denote $(a,b)_R,[a,b)_R,(a,b]_R$ similarly.

    Given an infinite ribbon $R$ and a skew shape $\lambda/\mu$ compatible with $R$. From inside out, let $\theta_1,\ldots,\theta_\ell$ be the intersections of $\lambda/\mu$ and copies of $R$. Each $\theta_i$ is an interval $[a_i,b_i)_R$ for some $a_i < b_i$. Hence, we can identify $\lambda/\mu$ with two tuples $\Bar{a} = (a_1,a_2,\ldots,a_\ell)$ and $\Bar{b} = (b_1,b_2,\ldots,b_\ell)$. We call $\Bar{a}$ the ending tuple and $\Bar{b}$ the starting tuple of $\lambda/\mu$.

    \begin{definition}\label{def:ribbon-mat}
        Given an infinite ribbon $R$ and a skew shape $\lambda/\mu$ compatible with $R$. The $\Bar{a}$ and $\Bar{b}$ be the ending and starting tuples of $\lambda/\mu$. The ribbon decomposition matrix of $\lambda/\mu$ with respect to $R$ is
        \[ A_{\lambda/\mu,R} = \left(s_{[a_j,b_i)_R}\right)_{i,j=1}^n, \]
        with the convention $s_{[a_j,b_i)_R} = 1$ if $a_j = b_i$ and $0$ is $a_j > b_i$.
    \end{definition}

    \begin{example}\label{ex:ribbon-tuple}
        Figure \ref{fig:ribbon-tuple} shows a skew shape with contents of its squares. We have $\Bar{a} = (0,-4,-3,3)$ and $\Bar{b} = (3,5,9,6)$.
    
        \begin{figure}[h!]
            \centering
            \includegraphics[scale = 1]{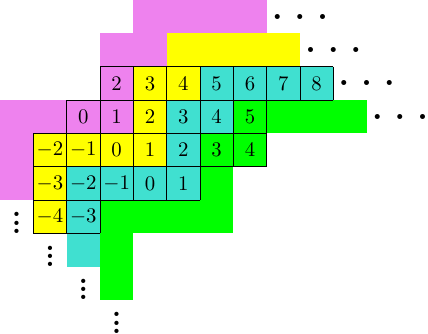}
            \caption{Skew shape and content}
            \label{fig:ribbon-tuple}
        \end{figure}

        The corresponding ribbon decomposition matrix is
        \[ \left(\begin{matrix}
            \scalebox{0.5}{
            \begin{ytableau}
                \none & \\
                 & 
            \end{ytableau}} & 
            \scalebox{0.5}{
            \begin{ytableau}
                 \none & \none & \none & \\
                 & & & \\
                 {} \\
                 {}
            \end{ytableau}} & 
            \scalebox{0.5}{
            \begin{ytableau}
                 \none & \none & \none & \\
                 & & & \\
                 {}
            \end{ytableau}} & 
            1 \\
            \scalebox{0.5}{
            \begin{ytableau}
                \none & & \\
                \none & \\
                 & 
            \end{ytableau}} & 
            \scalebox{0.5}{
            \begin{ytableau}
                \none & \none & \none & & \\
                 \none & \none & \none & \\
                 & & & \\
                 {} \\
                 {}
            \end{ytableau}} & 
            \scalebox{0.5}{
            \begin{ytableau}
                \none & \none & \none & & \\
                 \none & \none & \none & \\
                 & & & \\
                 {}
            \end{ytableau}} & 
            \scalebox{0.5}{
            \begin{ytableau}
                {} & {}
            \end{ytableau}} \\
            \scalebox{0.5}{
            \begin{ytableau}
                \none & \none & & & & \\
                \none & & \\
                \none & \\
                 & 
            \end{ytableau}} & 
            \scalebox{0.5}{
            \begin{ytableau}
                \none & \none & \none & \none & & & & \\
                \none & \none & \none & & \\
                 \none & \none & \none & \\
                 & & & \\
                 {} \\
                 {}
            \end{ytableau}} & 
            \scalebox{0.5}{
            \begin{ytableau}
                \none & \none & \none & \none & & & & \\
                \none & \none & \none & & \\
                 \none & \none & \none & \\
                 & & & \\
                 {}
            \end{ytableau}} & 
            \scalebox{0.5}{
            \begin{ytableau}
                 \none & & & & \\
                {} & {}
            \end{ytableau}} \\
            \scalebox{0.5}{
            \begin{ytableau}
                \none & \none & \\
                \none & & \\
                \none & \\
                 & 
            \end{ytableau}} & 
            \scalebox{0.5}{
            \begin{ytableau}
                \none & \none & \none & \none & \\
                \none & \none & \none & & \\
                 \none & \none & \none & \\
                 & & & \\
                 {} \\
                 {}
            \end{ytableau}} & 
            \scalebox{0.5}{
            \begin{ytableau}
                \none & \none & \none & \none & \\
                \none & \none & \none & & \\
                 \none & \none & \none & \\
                 & & & \\
                 {}
            \end{ytableau}} & 
            \scalebox{0.5}{
            \begin{ytableau}
                \none & \\
                {} & {}
            \end{ytableau}}
        \end{matrix} \right) \]
    \end{example}

    Ribbon decomposition matrices are special cases of Hamel--Goulden matrices. Thus, their determinants are skew Schur functions.

    \begin{thm}[{\cite[Theorem 3.1]{hamel1995planar}}]\label{thm:hamel-goulden}
        For any ribbon $R$ and skew shape $\lambda/\mu$ compatible with $R$, we have
        \[ \det(A_{\lambda/\mu, R}) = s_{\lambda/\mu}. \]
    \end{thm}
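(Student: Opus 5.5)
The natural route to Theorem~\ref{thm:hamel-goulden} is the Lindström--Gessel--Viennot lemma, exactly as in Hamel--Goulden's planar decomposition argument. I would encode every semistandard tableau of a ribbon shape $[a,b)_R$ as a lattice path. Walking along $R$ by increasing content, each step between consecutive boxes is either ``right'' (the next box lies to the right) or ``up'' (the next box lies above). The row and column conditions of an SSYT then become local conditions between consecutive entries: weak increase at a horizontal step, strict decrease reading upward at a vertical step. So a filling of $[a,b)_R$ is a path in a planar directed graph $G_R$. Its vertices are pairs (content, value), with steps going from content $c$ to $c+1$ whose allowed value changes are dictated by the ribbon's step type at $c$. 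Each step is weighted by $x_{\text{value}}$. Concretely, I would build $G_R$ so that paths from a source $u_c$ to a sink $v_d$ are in weight-preserving bijection with SSYT of $[c,d)_R$. One way is to use for each content a column of vertices indexed by values, with edges mimicking the usual Jacobi--Trudi (``$h$'') or dual (``$e$'') path models according to the ribbon step. This gives that the path generating function from $u_{a_j}$ to $v_{b_i}$ equals $s_{[a_j,b_i)_R}$. The conventions match: the empty path when $a_j=b_i$ gives $1$, and no path when $a_j>b_i$ gives $0$.

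Next I would apply LGV to the sources $u_{a_1},\dots,u_{a_\ell}$ and sinks $v_{b_1},\dots,v_{b_\ell}$. Planarity of $G_R$, together with the order of the $a_j$ and $b_i$ along the boundary, should force every non-intersecting family to use the identity permutation. Then $\det A_{\lambda/\mu,R}$ is the generating function of non-intersecting $\ell$-tuples whose $i$-th path goes from $u_{a_i}$ to $v_{b_i}$, that is, tuples of ribbon tableaux on the pieces $\theta_1,\dots,\theta_\ell$.

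The key combinatorial step is then to show that such a tuple is non-intersecting exactly when the fillings of $\theta_1,\dots,\theta_\ell$ glue to an SSYT of $\lambda/\mu$. Since the $\theta_i$ are nested translates along the tiling, box $(r,s)$ of content $c$ in $\theta_i$ has its neighbors in $\theta_{i\pm1}$ at the same content $c\pm 1$ or at content $c$ shifted by the period. I would express the inequalities between a box of $\theta_i$ and the box below it or right of it in $\theta_{i+1}$ as the vertex-level statement that path $i$ stays strictly on one side of path $i+1$ at the corresponding content. Conversely, I would check that compatibility of $\lambda/\mu$ with $R$ (contiguity of each $\theta_i$) is precisely what guarantees that all cross-piece adjacencies are of this type. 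Then non-intersection is equivalent to the semistandard conditions, and the identity $\det A_{\lambda/\mu,R}=s_{\lambda/\mu}$ follows.

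The main obstacle is this equivalence. It needs a careful choice of vertex coordinates, a shift of values by content so that ``weakly left/strictly above'' in the tableau becomes ``not touching'' in the graph, and a case analysis on the ribbon step types at the ends of each $\theta_i$. Those ends are where compatibility enters, including the degenerate cases $a_j\ge b_i$. I would first handle the two pure cases, $R$ a single row (Jacobi--Trudi) and $R$ a single column (dual Jacobi--Trudi), to fix conventions. I would then check that at a corner of $R$ the two local models glue planarly. Alternatively, one may simply cite Hamel--Goulden, Theorem 3.1, noting that $A_{\lambda/\mu,R}$ is the Hamel--Goulden matrix for the outside decomposition given by the tiling of $R$.
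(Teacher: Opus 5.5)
Your plan is the same Lindstr\"om--Gessel--Viennot argument the paper sketches. Your content-by-value network switches between the $h$-type and $e$-type local models according to whether $r_{c-1}$ is left of or below $r_c$, and this is exactly the network $\mathcal{N}_R$ of Section~\ref{subsec:network} (horizontal/down edges versus diagonal/up edges); like you, the paper does not write out the check that non-intersecting families are SSYT of $\lambda/\mu$ (it calls it easy and cites Hamel--Goulden). The one concrete choice you leave open, and the paper fixes, is to put each source and sink on the bottom row $j=1$ or at $j=\infty$ according to the step of $R$ at $b_i$ and $a_i$; your boundary-order argument for the identity permutation needs this, since $\bar a,\bar b$ need not be monotone (e.g.\ $\bar a=(0,-4,-3,3)$ in Example~\ref{ex:ribbon-tuple}).
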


    \begin{proof}[Proof sketch]
        The result follows directly from a Lindstr\"om–-Gessel–-Viennot argument similar to the Jacobi--Trudi identity. The network will be introduced in Section \ref{subsec:network}.
    \end{proof}

    For some choices of the ribbon $R$, ribbon decomposition matrices are well-known matrices whose determinants are (skew) Schur functions. 
    
    \begin{enumerate}
       \item When $R$ is an infinite $1\times \infty$ row, every skew shape $\lambda/\mu$ is compatible with $R$, and $A_{\lambda/\mu, R}$ is the \textit{Jacobi--Trudi matrix}.
       \item When $R$ is an infinite $\infty\times 1$ column, $A_{\lambda/\mu, R}$ is the \textit{dual Jacobi--Trudi matrix}, or \textit{N\"{a}egelsbach--Kostka matrix}. 
       \item When $R$ is an infinite hook, any straight shape $\lambda$ is compatible with $R$, and $A_{\lambda,R}$ is the \textit{Giambelli matrix}.
       \item When $R$ coincides with the outer strip of $\lambda$, $A_{\lambda,R}$ is the \textit{Lascoux--Pragacz matrix} in \cite{lascoux1988ribbon}.
    \end{enumerate}

    Ribbon decomposition matrices are closed under taking principal minors, a property that does not hold for general Hamel-Goulden matrices.

    \begin{prop}\label{prop:minors}
        Suppose $\lambda/\mu$ has ending and starting tuples $\Bar{a},\Bar{b}$ of length $n$. Let $I\subseteq [n]$, then the $I\times I$ minor of $A_{\lambda/\mu, R}$ is $A_{\lambda'/\mu', R}$, where the ending and starting tuples of $\lambda'/\mu'$ are $(a_i~|~i\in I)$ and $(b_i~|~i\in I)$.
    \end{prop}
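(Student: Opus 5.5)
The plan is to reduce the statement to a construction: given $I\subseteq[n]$, I will build an explicit skew shape $\lambda'/\mu'$ that is compatible with $R$ and whose ending and starting tuples are exactly $(a_i)_{i\in I}$ and $(b_i)_{i\in I}$, in the same order. Once such a shape exists, the equality of matrices is immediate from Definition \ref{def:ribbon-mat}. The $(i,j)$ entry of the $I\times I$ minor, for $i,j\in I$, is $s_{[a_j,b_i)_R}$, with the same conventions for $a_j=b_i$ and $a_j>b_i$. This is precisely the $(i,j)$ entry of $A_{\lambda'/\mu',R}$. So the whole content lies in showing the shape exists.

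For the construction I will use the tiling picture. Index the copies of $R$ in the tiling by consecutive integers $R_1,R_2,\ldots$ from inside out, so that $\theta_k=\lambda/\mu\cap R_k=[a_k,b_k)_{R_k}$. (If some copies meet $\lambda/\mu$ trivially, I will simply re-index; that changes nothing.) I then define $\lambda'/\mu'$ to be the union of the ribbon pieces $[a_i,b_i)$ for $i\in I$, placed in consecutive copies $R'_1,R'_2,\ldots$. The point is to delete the strips with index outside $I$ and slide the outer remaining strips inward by the appropriate number of translation steps of the tiling.

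The main obstacle is checking that this union is a genuine skew shape, i.e.\ a difference of two partitions, and that its intersections with the copies of $R$ are exactly these pieces. I would characterize skew shapes by the following property: a finite set of cells is a skew shape iff it is convex with respect to the partial order in which $(i,j)\le(i',j')$ when $i\le i'$ and $j\le j'$. For a union of contiguous pieces of consecutive translates of $R$, this convexity should reduce to local conditions between adjacent copies only. Concretely, the conditions are inequalities relating $a_k,a_{k+1}$ and $b_k,b_{k+1}$. They say that the piece in copy $k+1$ is ``outside'' the piece in copy $k$ in the sense of both its starting and ending boundary. These conditions look like $a_{k+1}\le a_k+\text{shift}$ type comparisons in content, similar to the condition that the boundary paths of $\lambda$ and $\mu$ are monotone.

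Then I must show that these conditions are transitive, so that they hold between copies $k<k'$ whenever they hold for all consecutive pairs. Deleting indices then preserves them. I expect this transitivity and locality step to be the delicate part, because it requires understanding how two translates of $R$ several steps apart interact. If the direct convexity argument gets messy, I would fall back on the lattice-path description from the Lindstr\"om--Gessel--Viennot network used for Theorem \ref{thm:hamel-goulden}. There, $\lambda/\mu$ corresponds to a family of nonintersecting paths from sources indexed by $b_i$ to sinks indexed by $a_i$. Compatibility is exactly the condition that sources and sinks are arranged so that these families exist. Removing a subfamily of paths obviously keeps the remaining ones nonintersecting, and this yields the shape $\lambda'/\mu'$.
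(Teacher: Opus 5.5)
Your construction is the right one: deleting the strips outside $I$ and translating the outer strips inward along the diagonal is exactly the paper's contraction, and convexity in the product order is the criterion the paper uses. The gap is that you never show the new diagram is convex, and that is the whole content of the proposition.

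\textbf{The local-conditions route is not carried out.} You don't say what the conditions between adjacent copies are, and they cannot be uniform content inequalities. For $R$ a row the tuples $\bar a,\bar b$ strictly decrease from inside out; for $R$ a column they strictly increase; in the paper's example $\bar a=(0,-4,-3,3)$ is not monotone at all. So both locality and transitivity would need an argument that depends on the shape of $R$ near the endpoints, and none is supplied.

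\textbf{The network fallback is circular.} The bijection between nonintersecting families and SSYT is set up for a shape already known to be a skew shape compatible with $R$. That a subfamily of a nonintersecting family is nonintersecting says nothing about whether $(a_i)_{i\in I},(b_i)_{i\in I}$ come from a skew shape. Your claim that compatibility is ``exactly the condition that these families exist'' is unproved, and it is essentially the proposition itself. Your remark that re-indexing past copies meeting $\lambda/\mu$ trivially ``changes nothing'' also needs this convexity fact, since it changes the diagram.

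\textbf{The missing idea: no local analysis is needed.} Reduce to deleting a single strip $\theta_r$, lying in copy $R+(t_r,t_r)$. Write $t(c)$ for the index of the copy $R+(t,t)$ containing a cell $c$.
\begin{itemize}
\item The index $t$ is weakly increasing in the product order. Consecutive boxes of $R$ differ by a right step or an up step, so moving one cell right or down either stays in the same copy or passes to the next copy outward.
\item Let $\phi$ fix cells with $t<t_r$ and send a cell $c$ with $t(c)\ge t_r$ to $c+(1,1)$. Then $\phi$ is a bijection from the plane onto the complement of copy $t_r$.
\item The contracted diagram is $D=\phi^{-1}(\lambda/\mu\setminus\theta_r)$.
\end{itemize}
Suppose $x\le z\le y$ with $x,y\in D$ and $z\notin D$. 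Since $t(x)\le t(z)\le t(y)$, the cells of the chain that $\phi$ shifts form an upper segment of it. Because $u\le v$ implies $u\le v+(1,1)$, the images $\phi(x)\le\phi(z)\le\phi(y)$ still form a chain. Its endpoints lie in $\lambda/\mu$, while $\phi(z)\notin\lambda/\mu$. This contradicts convexity of $\lambda/\mu$.

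This is precisely the paper's four-case argument. Since $\phi$ maps copies to copies preserving contents and their order, the ending and starting tuples are as claimed, and the matrix identity then follows as you said.
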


    \begin{proof}
        It suffices to prove the statement for $|I| = n-1$, and suppose $r = [n] -I$. We want to show that if we contract the ribbon $\theta_r$, the remaining boxes still form a skew shape, then the ending and starting tuples statement is immediate. Let $D$ be the diagram after contracting $\theta_r$. If $D$ is not a skew shape, then there is squares $(i,j)$, $(i',j')$, and $(i'', j'')$ such that $i\leq i'' \leq i'$, $j\leq j'' \leq j'$ and $(i,j), (i',j')\in D$ but $(i'',j'')\notin D$. If $\theta_r$ is originally NW of $(i,j)$, then adding back $\theta_r$, $(i,j)$ becomes $(i+1,j+1)$,  $(i',j')$ becomes $(i'+1,j'+1)$, and $(i'',j'')$ becomes $(i''+1,j''+1)$ (see Figure \ref{fig:ribbon-proof-1}). Then, $(i+1,j+1), (i'+1,j'+1)\in \lambda/\mu$ but $(i''+1,j''+1)\notin\lambda/\mu$, contradiction.

        \begin{figure}[h!]
         \centering
            \begin{subfigure}[c]{0.45\textwidth}
                \centering
                \includegraphics[scale = 1]{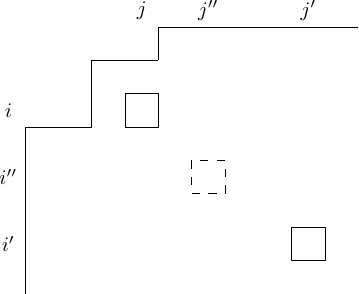}
                \caption{After contracting $\theta_r$}
                \label{subfig:ribbon-proof-1}
            \end{subfigure}\quad
            \begin{subfigure}[c]{0.45\textwidth}
                \centering
                \includegraphics[scale = 1]{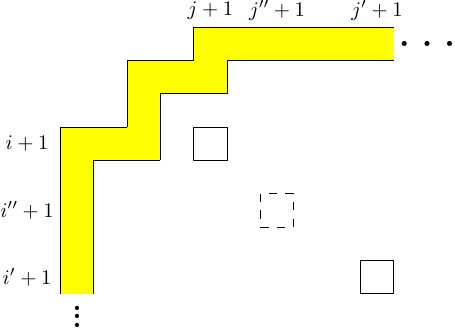}
                \caption{Original}
                \label{subfig:ribbon-proof-1b}
            \end{subfigure}
            \caption{}
            \label{fig:ribbon-proof-1}
        \end{figure}

        Similarly, if $\theta_r$ is originally SE of $(i,j)$ but NW of $(i'', j'')$, then $(i,j), (i'+1,j'+1)\in \lambda/\mu$ but $(i''+1,j''+1)\notin\lambda/\mu$. If $\theta_r$ is originally SE of $(i'',j'')$ but NW of $(i', j')$, then $(i,j), (i'+1,j'+1)\in \lambda/\mu$ but $(i'',j'')\notin\lambda/\mu$. If $\theta_r$ is originally SE of $(i',j')$, then $(i,j), (i',j')\in \lambda/\mu$ but $(i'',j'')\notin\lambda/\mu$. All of the above leads to a contradiction.
    \end{proof}

    \begin{example}
        Continuing Example \ref{ex:ribbon-tuple}, contract the yellow ribbon, we get the skew shape in Figure \ref{fig:ribbon-contract}. The ribbon decomposition matrix for this skew shape is the $\{1,3,4\}\times \{1,3,4\}$ minor of the matrix in Example \ref{ex:ribbon-tuple}.
    \end{example}
    
    \begin{figure}[h!]
        \centering
        \begin{minipage}{.5\textwidth}
            \centering
            \includegraphics[scale = 1]{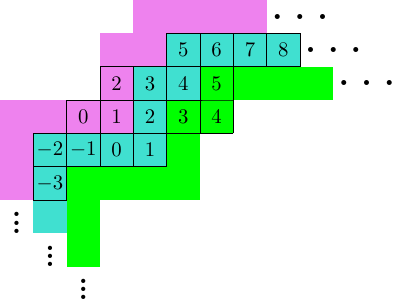}
            \caption{}
            \label{fig:ribbon-contract}
        \end{minipage}%
        \begin{minipage}{.5\textwidth}
            \centering
            \includegraphics[scale = 1]{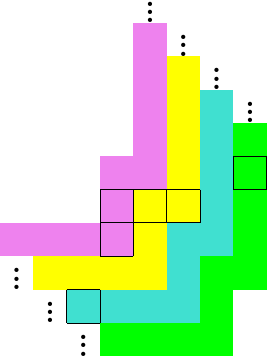}
            \caption{}
            \label{fig:badMinor}
        \end{minipage}
    \end{figure}

    \begin{remark}
        It is natural to ask if all minors of ribbon decomposition matrices are also Schur positive. The answer is no. For example, the ribbon decomposition matrix for the ribbon and skew shape in Figure \ref{fig:badMinor} is
        \[ 
        \left(\begin{matrix}
            \scalebox{0.5}{
            \begin{ytableau}
                {} \\
                {}
            \end{ytableau}} & 
            1 & 
            \scalebox{0.5}{
            \begin{ytableau}
                 \none & \none & \none & \\
                 & & & 
            \end{ytableau}} & 
            0 \\[0.5cm]
            \scalebox{0.5}{
            \begin{ytableau}
                {} & {} \\
                {} \\
                {} 
            \end{ytableau}} & 
            \scalebox{0.5}{
            \begin{ytableau}
                 {} & {}
            \end{ytableau}} & 
            \scalebox{0.5}{
            \begin{ytableau}
                \none & \none & \none & & \\
                \none & \none & \none & \\
                 & & & 
            \end{ytableau}} & 
            0 \\[0.5cm]
            0 & 
            0 & 
            \scalebox{0.5}{
            \begin{ytableau}
                 {}
            \end{ytableau}} & 
            0 \\[0.2cm]
            \scalebox{0.5}{
            \begin{ytableau}
                \none & {} \\
                \none & {} \\
                \none & {} \\
                {} & {} \\
                {} \\
                {} 
            \end{ytableau}} & 
            \scalebox{0.5}{
            \begin{ytableau}
                \none & {} \\
                \none & {} \\
                \none & {} \\
                {} & {}
            \end{ytableau}} & 
            \scalebox{0.5}{
            \begin{ytableau}
                \none & \none & \none & \none & \\
                \none & \none & \none & \none & \\
                \none & \none & \none & \none & \\
                \none & \none & \none & & \\
                \none & \none & \none & \\
                 & & & 
            \end{ytableau}} & 
            \scalebox{0.5}{
            \begin{ytableau}
                {}
            \end{ytableau}}
        \end{matrix} \right). 
        \]
        The minor of rows $1,2,4$ and columns $1,2,3$ is not Schur positive. However, Theorems \ref{thm:main-thm} and \ref{thm:prod-minor} together imply that any product of two complementary minors is Schur positive. We conjecture that any product of $k$ complementary minors is also Schur positive.
    \end{remark}

\subsection{Networks and noncrossing paths}\label{subsec:network}

    The proof Theorem \ref{thm:hamel-goulden} is an application of Lindstr\"om–Gessel–Viennot lemma. We need to create a network with starting points $P_1,\ldots,P_n$ and ending points $Q_1,\ldots,Q_n$ such that families of noncrossing paths from $P_i$'s to $Q_i$'s correspond to SSYTs of shape $\lambda/\mu$.

    \begin{definition}
        Given an infinite ribbon $R$, recall that $R$ has a box $r_i$ for each content $i\in \ZZ$. We define the network $\mathcal{N}_R$ as follows. The vertices of $\mathcal{N}_R$ are $(i,j)$, where $i\in \ZZ$ and $j\in \NN_{>0}$. There are four types of directed edges
        \begin{enumerate}
            \item \textit{diagonal edges} $(i+1,j-1)\rightarrow (i,j)$, for all $j\geq 2$ and $i$ such that box $r_{i-1}$ is below box $r_{i}$,
            \item \textit{horizontal edges} $(i+1,j)(i,\rightarrow j)$, for all $j\geq 1$ and $i$ such that box $r_{i-1}$ is left of box $r_{i}$,
            \item \textit{up edges} $(i,j) \rightarrow (i,j+1)$, for all $j\geq 1$ and $i$ such that box $r_{i-1}$ is below box $r_{i}$,
            \item \textit{down edges} $(i,j+1) \rightarrow (i,j)$, for all $j\geq 1$ and $i$ such that box $r_{i-1}$ is left of box $r_{i}$.
        \end{enumerate}

        Given a skew shape $\lambda/\mu$ compatible with $R$, and let $\Bar{a}$ and $\Bar{b}$ be the ending and starting tuples. We put the starting point $P_i$ at $(b_i,1)$ if box $r_{b_i-1}$ is below box $r_{b_i}$ and at $(b_i,\infty)$ otherwise. We put the ending point $Q_i$ at $(a_i,1)$ if box $r_{a_i-1}$ is left of box $r_{a_i}$ and at $(a_i,\infty)$ otherwise. We denoted $\mathcal{N}_{\lambda/\mu,R}$ the network $\mathcal{N}_R$ with the starting and ending points $P_i$'s, $Q_i$'s from the skew shape $\lambda/\mu$.

        The weights of the edges of $\mathcal{N}_R$ are as follows. The up and down edges have weight $1$. The diagonal and horizontal edges have weight $x_j$ if the source vertex is $(i,j)$ for some $i$. We denote the weight of an edge $e$ by $\omega(e)$. The weight of a path $\pi_i$ from $P_i$ to $Q_i$ is defined to be
        \[ \omega(\pi_i) = \prod_{e\in\pi_i}\omega(e). \]
        The weight of a family of noncrossing paths is the product of the weights of its paths.
    \end{definition}

    It is easy to check that each family of noncrossing paths from $P_i$'s to $Q_i$'s in $\mathcal{N}_{\lambda/\mu,R}$ corresponds to a SSYT of shape $\lambda/\mu$: the horizontal and diagonal edges on the path from $P_i$ to $Q_i$ give the entries on the ribbon $\theta_i$. In similar fashion, each entry $s_{[a_j,b_i)}$ in $A_{\lambda/\mu,R}$ counts weights of paths from $P_i$ to $Q_j$.

    \begin{example}\label{ex:network}
        Continuing Example \ref{ex:ribbon-tuple}, Figure \ref{subfig:network} shows the network $\mathcal{N}_{\lambda/\mu,R}$ and a family of noncrossing paths. The starting points are marked by filled circles while the ending points are marked by hollow circles. Figure \ref{subfig:ribbon-SSYT} shows the corresponding SSYT.

        \begin{figure}[h!]
         \centering
            \begin{subfigure}[c]{\textwidth}
                \centering
                \includegraphics[scale = 0.7]{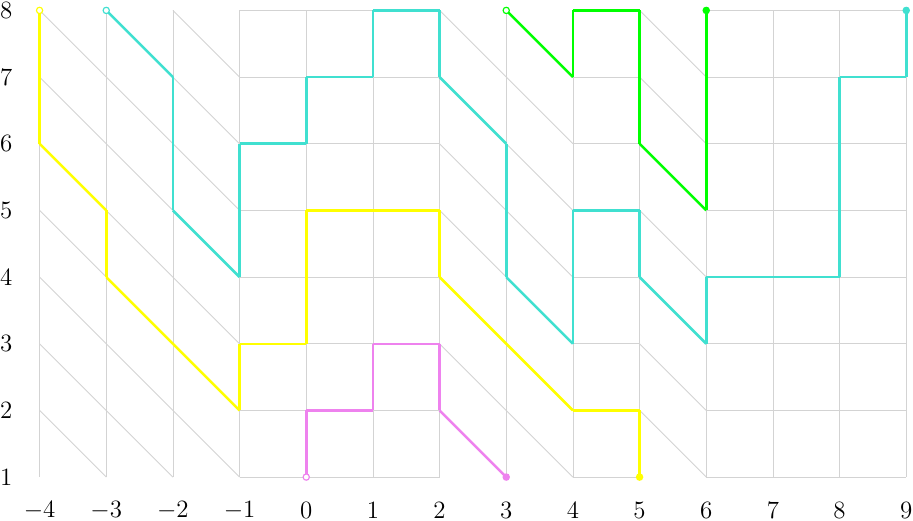}
                \caption{Noncrossing paths}
                \label{subfig:network}
            \end{subfigure}
         
         \vspace{2em}
            \begin{subfigure}[c]{\textwidth}
                \centering
                \includegraphics[scale = 1]{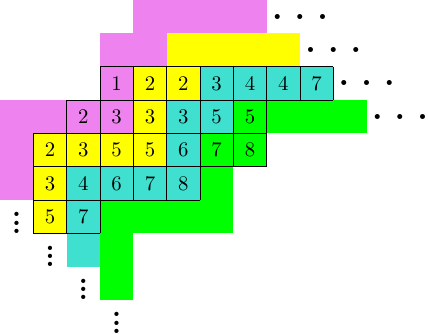}
                \caption{SSYT}
                \label{subfig:ribbon-SSYT}
            \end{subfigure}
    
            \caption{}
            \label{fig:network-SSYT}
        \end{figure}
    \end{example}

    To compute Temperley--Lieb immanants, we need more families of paths.

    \begin{definition}
        A \textit{(generalized) wiring} $H$ on a network $\mathcal{N}_{\lambda/\mu,R}$ is a family of $n$ paths $\pi_1,\ldots,\pi_n$ such that
        \begin{itemize}
            \item $\pi_i$ goes from $P_i$ to $Q_i$ for all $i$, and
            \item no three paths share a vertex.
        \end{itemize}
        The weight of a wiring is the product of the weights of its paths, i.e.
        \[ \omega(H) = \prod_{i = 0}^n\omega(\pi_i). \]
    \end{definition}

\section{Temperley--Lieb immanants and shuffle tableaux}

\subsection{Temperley--Lieb immanants}

    The \textit{Temperley--Lieb algebra} $TL_n(2)$ is the $\CC$-algebra generated by $t_1,\ldots,t_{n-1}$ subject to the relations
    \begin{align*}
        t_i^2 &= 2 t_i,&\hspace{-5em}&\text{for $i = 1,\ldots,n-1$}, \\
        t_it_jt_i &= t_i,&\hspace{-5em}&\text{if $|i-j| = 1$}, \\
        t_it_j &= t_jt_i,&\hspace{-5em}&\text{if $|i-j| > 1$}.
    \end{align*}

    A \textit{$321$-avoiding permutation} is a permutation in $S_n$ that has no reduced decomposition of the form $\cdots s_is_js_i\cdots$ where $|i-j| = 1$. Because of the second relation, $TL_n(2)$ has a natural basis $\{t_w~|~\text{$w$ is a 321-avoiding permutation}\}$, where $t_w := t_{i_1}\ldots t_{i_k}$ for a reduced decomposition $w = s_{i_1}\ldots s_{i_k}$. The map
    \[ \theta~:~s_i \rightarrow t_i - 1 \]
    determines a homomorphism $\theta~:~\CC[S_n] \rightarrow TL_n(2)$. For any $w\in S_n$ and any 321-avoiding permutation $u\in S_n$, let $f_u(w)$ be the coefficient of $t_u$ in $\theta(w) = (t_{i_1}-1)\ldots (t_{i_k}-1)$ for a reduced decomposition $w = s_{i_1}\ldots s_{i_k}$. In \cite{rhoades2005temperley}, Rhoades and Skandera defined the \textit{Temperley--Lieb immanant} of an $n\times n$ matrix $X = (x_{ij})$ by
    \[ \Imm_u^{\TL}(X) = \sum_{w\in S_n}f_u(w)x_{1,w(1)}\ldots x_{n,w(n)}. \]

    Products of $t_i$'s in $TL_n(\xi)$ can be computed graphically using \textit{Temperley--Lieb diagrams}. The generators $t_1,t_2,t_3,\ldots$ are represented by
    \[ \resizebox{.05\textwidth}{!}{
        \begin{tikzpicture}
            \uncross{1}{1}{3}
            \draw[thick] (1.5,-5) to (0.5, -5);
            \draw (1,-4) node[anchor=center, scale = 1.5] {$\vdots$};
        \end{tikzpicture}
    },\quad 
    \resizebox{.05\textwidth}{!}{
        \begin{tikzpicture}
            \uncross{1}{2}{3}
            \draw[thick] (1.5,-5) to (0.5, -5);
            \draw (1,-4) node[anchor=center, scale = 1.5] {$\vdots$};
        \end{tikzpicture}
    },\quad 
    \resizebox{.05\textwidth}{!}{
        \begin{tikzpicture}
            \uncross{1}{3}{3}
            \draw[thick] (1.5,-5) to (0.5, -5);
            \draw (1,-4) node[anchor=center, scale = 1.5] {$\vdots$};
        \end{tikzpicture}
    },\ldots\]
    and multiplication is represented by concatenation.
    
    \begin{example}
        The Temperley--Lieb diagram for $t_1t_3t_2t_1t_3\in TL_4$ is
        \[ \resizebox{!}{.15\textwidth}{
            \begin{tikzpicture}
                \draw (0,0) node[anchor=center] {$L_1$};
                \draw (0,-1) node[anchor=center] {$L_2$};
                \draw (0,-2) node[anchor=center] {$L_3$};
                \draw (0,-3) node[anchor=center] {$L_4$};
                \draw (6,0) node[anchor=center] {$R_1$};
                \draw (6,-1) node[anchor=center] {$R_2$};
                \draw (6,-2) node[anchor=center] {$R_3$};
                \draw (6,-3) node[anchor=center] {$R_4$};
                \filldraw[black] (0.5,0) circle (2pt);
                \filldraw[black] (0.5,-1) circle (2pt);
                \filldraw[black] (0.5,-2) circle (2pt);
                \filldraw[black] (0.5,-3) circle (2pt);
                \filldraw[black] (5.5,0) circle (2pt);
                \filldraw[black] (5.5,-1) circle (2pt);
                \filldraw[black] (5.5,-2) circle (2pt);
                \filldraw[black] (5.5,-3) circle (2pt);
                \uncross{1}{1}{3}
                \uncross{2}{3}{3}
                \uncross{3}{2}{3}
                \uncross{4}{1}{3}
                \uncross{5}{3}{3}
            \end{tikzpicture}
        } \]
    \end{example}

    Each diagram consists of a noncrossing matching of the boundary vertices and internal loops. If two Temperley--Lieb diagrams have the same matching and the same number of internal loops, then the corresponding products are equal to each other. If the diagram of $a$ is obtained from the diagram of $b$ by removing $k$ internal loops, then $b = \xi^ka$ in $TL_n$. Thus, we have a natural bijection between basis elements of $TL_n$ and noncrossing matchings of $2n$ vertices.

\subsection{Product of minors}

    For two subsets $I,J\in [n]$ such that $|I| = |J|$, we color the boundary vertices as follows. Color $L_i$ black if $i\in I$ and white otherwise. Color $R_j$ white if $j\in J$ and black otherwise. A Temperley--Lieb basis element is \textit{compatible} with $(I,J)$ if each strand in the diagram has one black endpoint and one white endpoint.

    Denote by $\Theta(I,J)$ the set of Temperley--Lieb basis elements that are compatible with $(I,J)$. Let $\Bar{I} := [n] \backslash I$, and let $\Delta_{I,J}(A)$ denote the minor of $A$ in the row set $I$ and column set $J$. We have the following identity.
    \begin{thm}[{\cite[Proposition 4.3]{rhoades2005temperley}}, cf. \cite{skandera2004inequalities}]\label{thm:prod-minor}
        For two subsets $I,J\in [n]$ such that $|I| = |J|$, we have
        \[ \Delta_{I,J}(A)\cdot\Delta_{\Bar{I},\Bar{J}}(A) = \sum_{\tau\in\Theta(I,J)}\Imm^{\TL}_\tau(A). \]
    \end{thm}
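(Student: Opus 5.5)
This identity holds for an arbitrary $n\times n$ matrix $A$ with entries in a commutative ring, so I will prove it by comparing coefficients of monomials. I will expand both sides as linear combinations of the monomials $x_{1,w(1)}\cdots x_{n,w(n)}$ in the generic matrix $X=(x_{ij})$, where $w$ ranges over all maps of the form permitted by each side.

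First I rewrite the left-hand side in terms of permutations. Expanding $\Delta_{I,J}(X)\Delta_{\bar I,\bar J}(X)$ gives
\[
\Delta_{I,J}(X)\,\Delta_{\bar I,\bar J}(X)=\sum_{w} \operatorname{sgn}(w|_I)\operatorname{sgn}(w|_{\bar I})\,x_{1,w(1)}\cdots x_{n,w(n)} .
\]
The sum runs over those $w\in S_n$ with $w(I)=J$. The sign of each restriction is computed relative to the order-preserving identifications of $I\to J$ and $\bar I\to\bar J$. Comparing with the definition of $\Imm^{\TL}_\tau$, it therefore suffices to prove the following identity in $TL_n(2)$:
\[
\sum_{w\in S_n,\ w(I)=J}\operatorname{sgn}(w|_I)\operatorname{sgn}(w|_{\bar I})\,\theta(w)\;\Big|_{\text{coefficients}} \;=\; \sum_{\tau\in\Theta(I,J)} t_\tau .
\]
More precisely, I need $\sum_{\tau\in\Theta(I,J)} f_\tau(w)$ to equal $\operatorname{sgn}(w|_I)\operatorname{sgn}(w|_{\bar I})$ when $w(I)=J$, and to equal $0$ otherwise.

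To compute $\sum_{\tau\in\Theta(I,J)} f_\tau(w)$, I will use a linear functional $\phi_{I,J}$ on $TL_n(2)$. It is defined on basis diagrams by $\phi_{I,J}(t_\tau)=1$ if $\tau$ is compatible with $(I,J)$ and $0$ otherwise. The required quantity is then $\phi_{I,J}(\theta(w))$. The key step is to give $\phi_{I,J}$ a local, state-sum description, and I do this by orienting strands. Assign to each boundary vertex an orientation according to its color, black as a source and white as a sink. A diagram is compatible exactly when every strand can be oriented consistently with these endpoint colors. I then claim that $\phi_{I,J}(D)$ equals a signed count of consistent orientations of all strands and loops of the product diagram, normalized so that each closed loop contributes $2$. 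The two orientations of a loop supply exactly the factor $\xi=2$, so this count is well defined on $TL_n(2)$ and not just on diagrams. This is the standard two-colour (sign) vertex model for $TL(2)$, i.e.\ the $U_q(\mathfrak{sl}_2)$ at $q=1$ representation $V^{\otimes n}$.

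Next I evaluate the functional on a product of generators. For $w=s_{i_1}\cdots s_{i_k}$, concatenate the factors $(t_{i}-1)$. In this vertex model, the local factor $t_i-1$ acts on the two strands at positions $i,i+1$ as minus the swap of labels, since $t_i$ acts by $\mathrm{id}+\mathrm{swap}$ on pairs of opposite colors, suitably signed. Summing over states, the colors are transported along the wiring of $w$, and each generator contributes a sign when the two strands carry equal colors. A consistent state therefore exists iff $w$ maps the black positions $I$ to the positions $J$, and the total sign is $(-1)^{\#\text{inversions within } I}\cdot(-1)^{\#\text{inversions within }\bar I}$. That is exactly $\operatorname{sgn}(w|_I)\operatorname{sgn}(w|_{\bar I})$.

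The main obstacle is getting the signs and conventions right: left versus right boundary, the coloring of $R_j$ being reversed, and the sign of $t_i$ on oppositely colored pairs. The way to settle these is to check directly that the local rule reproduces the relations $t_i^2=2t_i$ and $t_it_{i\pm1}t_i=t_i$. Once that consistency holds, the state sum factors through $TL_n(2)$, and the computation above completes the proof. Alternatively, this is the content of \cite[Proposition 4.3]{rhoades2005temperley}, which the argument reproduces.
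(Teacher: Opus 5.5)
\textbf{Verdict: correct, by a genuinely different route, with two repairs needed.}

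The paper gives no proof of this statement; it quotes it from Rhoades--Skandera. In the paper's framework it is a network identity. For a matrix coming from a planar network, Lindstr\"om applied separately to the red paths ($I\to J$) and the blue paths ($\Bar{I}\to\Bar{J}$) writes $\Delta_{I,J}\Delta_{\Bar{I},\Bar{J}}$ as a sum over colored covers. Sorting the covers by Temperley--Lieb type gives $\sum_{\tau\in\Theta(I,J)}\Imm^{\TL}_\tau$, and general matrices follow by density.

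Your route is purely algebraic. You realize $\tau\mapsto[\tau\in\Theta(I,J)]$ as a matrix coefficient of the orientation (two-state) representation of $TL_n(2)$ on $(\CC^2)^{\otimes n}$. In that representation $\theta(s_i)$ becomes a signed transposition of labels, so $\phi_{I,J}(\theta(w))$ can be read off strand by strand. This proves the polynomial identity for an arbitrary matrix at once, with no networks or density argument. It also explains why compatibility is the right condition: it says exactly that the diagram admits an orientation. The network proof, in exchange, gives the cover-by-cover, weight-preserving refinement that the paper needs to pass to shuffle tableaux and crystals.

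\textbf{Repair 1: the sign of the local rule.} Your functional is exactly $[\tau\in\Theta(I,J)]$ on basis diagrams only in the unsigned orientation model, where cup and cap weights are $+1$.
\begin{itemize}
\item In that model $t_i$ acts as $\mathrm{id}+\mathrm{swap}$ on oppositely labelled pairs and as $0$ on equal pairs.
\item Hence $t_i-1$ is $+\mathrm{swap}$ on opposite labels and $-\mathrm{id}$ on equal labels. It is not ``minus the swap''; your following sentence (a sign only for equal colors) is the correct one.
\item Using $-\mathrm{swap}$ throughout would give $\operatorname{sgn}(w)$. That is already wrong for $n=2$, $I=\{1\}$, $J=\{2\}$, where the right-hand side is $f_{t_1}(s_1)=+1$.
\item The $\mathfrak{sl}_2$ model $t_i\mapsto 1-\sigma_i$ (with $\sigma_i$ the flip of factors $i,i+1$) is diagonally gauge-equivalent and also works. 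However, every compatible basis diagram then carries the global sign $(-1)^{\sum_{i\in I}i+\sum_{j\in J}j}$, which you would have to track.
\item Checking $t_i^2=2t_i$ and $t_it_{i\pm1}t_i=t_i$ cannot settle this, since both sign choices satisfy the relations. What settles it is requiring weight exactly $1$ on compatible basis diagrams.
\end{itemize}

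\textbf{Repair 2: the direction convention.} In the left-to-right concatenation of $(t_{i_1}-1)\cdots(t_{i_k}-1)$, the strand leaving $L_p$ ends at the position obtained by applying $s_{i_1}$ first and $s_{i_k}$ last. Your conclusion ``$w(I)=J$'' needs this to be $w(p)$. This convention is forced, not cosmetic. If $w=s_{i_1}\cdots s_{i_k}$ is composed right to left, the identity as stated fails. For $n=3$, $I=\{1\}$, $J=\{2\}$ one has $\Theta(I,J)=\{t_1,t_1t_2\}$. The coefficient of $x_{12}x_{23}x_{31}$ is then $-1$ on the left, but $f_{t_1}(s_1s_2)+f_{t_1t_2}(s_1s_2)=-1+1=0$ on the right.

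With these two points fixed, your argument is complete.
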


    From now on, we will use
    \[ I = \nodd =  \{i \in [n]~|~i~\text{is odd}\}. \]
    The reason for this choice of $I$ is the following observation.
    \begin{prop}[{\cite[Proposition 2.15]{nguyen2025temperley}}]\label{prop:compatible}
        Let $I = \nodd$, every type $\tau$ is compatible with $(I,I)$.
    \end{prop}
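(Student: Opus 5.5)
The plan is to recast compatibility as a parity condition on positions around a circle, and then use the standard fact that each arc of a noncrossing perfect matching encloses an even number of points. A Temperley--Lieb basis element of $TL_n$ is determined by a noncrossing perfect matching of the $2n$ boundary vertices $L_1,\ldots,L_n,R_1,\ldots,R_n$, so it suffices to show that every such matching pairs a black vertex with a white vertex under the coloring for $(I,I)$ with $I=\nodd$.

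First I would place the boundary vertices on a circle in boundary order and number them. Reading down the left side and then up the right side, put $L_i$ at position $i$ and $R_j$ at position $2n+1-j$. A Temperley--Lieb diagram is planar, so its strands give a noncrossing perfect matching of positions $1,\ldots,2n$ in this cyclic order.

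Next I would translate the coloring into positions. By definition $L_i$ is black iff $i\in I$, i.e.\ iff $i$ is odd, which is iff its position is odd. Also $R_j$ is white iff $j\in I$, so $R_j$ is black iff $j$ is even. That happens iff $2n+1-j$ is odd, so again black iff its position is odd. Hence in both cases a vertex is black exactly when its position is odd.

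Finally, I would show that matched positions have opposite parity. Suppose positions $p<q$ are matched. By noncrossingness, every vertex strictly between them is matched to another vertex strictly between them. So $q-p-1$ is even, and $p$ and $q$ have opposite parity. By the previous step, every strand therefore joins a black endpoint to a white endpoint, so $\tau\in\Theta(I,I)$.

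There is no real obstacle here. The only point needing care is checking that the cyclic order used is the one in which Temperley--Lieb diagrams are noncrossing, namely left side top-to-bottom followed by right side bottom-to-top. Internal loops play no role, since they have no endpoints.
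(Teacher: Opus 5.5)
Your proof is correct. With $I=\nodd$ the colors alternate in the cyclic boundary order $L_1,\ldots,L_n,R_n,\ldots,R_1$, and each chord of a noncrossing perfect matching encloses an even number of points, so every strand joins a black endpoint to a white one. The paper gives no argument of its own here and only cites \cite[Proposition 2.15]{nguyen2025temperley}; your argument is the natural, standard one for this fact.
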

    \begin{cor}[{\cite[Corollary 2.16]{nguyen2025temperley}}]\label{cor:all-type}
        For $I = \nodd$,
        \[ \Delta_{I,I}(A)\cdot\Delta_{\Bar{I}\Bar{I}}(A) = \sum_{\tau}\Imm^{\TL}_\tau(A), \]
        where the sum is over all Temperley--Lieb basis elements.
    \end{cor}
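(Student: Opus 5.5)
The corollary follows by combining Theorem \ref{thm:prod-minor} with Proposition \ref{prop:compatible}, so the plan is only to make that deduction explicit.

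First, specialize Theorem \ref{thm:prod-minor} to $I=J=\nodd$. This is legitimate, since trivially $|I|=|J|$. It gives
\[ \Delta_{I,I}(A)\cdot\Delta_{\Bar{I},\Bar{I}}(A) = \sum_{\tau\in\Theta(I,I)}\Imm^{\TL}_\tau(A). \]

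Next, identify the index set. By definition, $\Theta(I,I)$ is the set of Temperley--Lieb basis elements compatible with $(I,I)$. Proposition \ref{prop:compatible} says that, for $I=\nodd$, every basis element $\tau$ (every noncrossing matching of the $2n$ boundary vertices) is compatible with $(I,I)$. Hence $\Theta(I,I)$ is the full Temperley--Lieb basis, and substituting this into the display gives the corollary.

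The only step with content is Proposition \ref{prop:compatible}, which we may assume. For completeness, here is why it holds. Place the $2n$ boundary vertices on a circle in the order $L_1,\dots,L_n,R_n,\dots,R_1$. Under the coloring for $(\nodd,\nodd)$:
\begin{itemize}
\item $L_i$ is black exactly when $i$ is odd;
\item $R_j$ is white exactly when $j$ is odd.
\end{itemize}
Going around the circle, the $L$'s alternate in color, and so do the $R$'s. At the two junctions $L_n,R_n$ and $R_1,L_1$ the colors also differ, since $L_i$ and $R_i$ always receive opposite colors. So the colors strictly alternate around the whole circle.

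In a noncrossing perfect matching, each chord separates the remaining vertices into two arcs, and each arc must have even size because it is itself perfectly matched. Therefore the two endpoints of every chord are at odd distance along the circle. Since colors alternate, they have opposite colors, so every strand has one black and one white endpoint, which is exactly compatibility.

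There is no real obstacle here. The only care needed is to get the cyclic order of the boundary vertices and the coloring conventions right, and these are checked above.
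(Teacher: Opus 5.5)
Your proposal is correct and follows the paper's route. The paper gives no proof of its own: it cites the corollary from \cite{nguyen2025temperley} and places it right after Theorem \ref{thm:prod-minor} and Proposition \ref{prop:compatible}. Your argument makes that deduction explicit by specializing Theorem \ref{thm:prod-minor} to $I=J=\nodd$ and using Proposition \ref{prop:compatible} to see that $\Theta(I,I)$ is the whole Temperley--Lieb basis. Your supplementary parity argument for Proposition \ref{prop:compatible}, which the paper only cites, is also correct: colors alternate around the cyclic order $L_1,\dots,L_n,R_n,\dots,R_1$, and every chord of a noncrossing perfect matching joins vertices at odd distance.
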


    Recall from Proposition \ref{prop:minors} that principal minors of $A_{\lambda/\mu, R}$ are $A_{\lambda'/\mu',R}$ for some $\lambda'/\mu'$. Let $\Bar{a},\Bar{b}$ be the ending and starting tuples of $\lambda/\mu$. We define $\lambda_R/\mu_R$ such that its ending and starting tuples are $(a_i~|~i~\text{is odd})$ and $(b_i~|~i~\text{is odd})$. We define $\lambda_B/\mu_B$ such that its ending and starting tuples are $(a_i~|~i~\text{is even})$ and $(b_i~|~i~\text{is even})$. By Proposition \ref{prop:minors},
    \[ s_{\lambda_R/\mu_R} = \Delta_{I,I}(A_{\lambda/\mu,R}),~\text{and}~s_{\lambda_B/\mu_B} = \Delta_{\Bar{I},\Bar{I}}(A_{\lambda/\mu,R}). \]
    Then, by Corollary \ref{cor:all-type},
    \[ s_{\lambda_R/\mu_R}\cdot s_{\lambda_B/\mu_B} = \sum_{\tau}\Imm^{\TL}_\tau(A), \]
    where the sum is over all Temperley--Lieb basis elements. The analogue of this on the level of wirings is as follows.

    \begin{definition}\label{def:colored-cover}
        Given a wiring $H$, a colored cover of $H$ is a coloring of the edges in $H$ by two colors red and blue that forms two families of red and blue paths satisfying:
        \begin{enumerate}
            \item every edge in $H$ is colored; in particular, the doubly covered edges are colored by two colors;
            \item the red paths are noncrossing paths from $\{P_i~|~i\in I\}$ to $\{Q_i~|~i\in I\}$, and the blue paths are noncrossing paths from $\{P_i~|~i\in \Bar{I}\}$ to $\{Q_i~|~i\in \Bar{I}\}$.
        \end{enumerate}
        We denote $\omega(x) = \omega(H)$ the weight of $x$. Let $\Cov(\mathcal{N}_{\lambda/\mu,R})$ be the set of colored covers of all generalized wirings $H$ on $\mathcal{N}_{\lambda/\mu,R}$. Given $x\in\Cov(\mathcal{N}_{\lambda/\mu,R})$, we read the Temperley--Lieb diagram by \textit{uncrossing} the intersections as follows.
        \begin{enumerate}
            \item Contract any doubly covered subpath to a single vertex.
            \item If two sources coincide at the same vertex $v$, create two new sources, each having one edge going to $v$. If two sinks coincide at the same vertex $v$, create two new sinks, each having one edge coming from $v$.
            \item For each vertex $v$ of indegree two and outdegree two, create vertex $v'$ with indegree two and vertex $v''$ with outdegree two.
        \end{enumerate}
        We denote $\psi(x)$ the resulting noncrossing matching.
    \end{definition}

    \begin{lemma}[{\cite[Corollary 2.11]{nguyen2025temperley}}]
        For any basis element $\tau$ of $TL_n(2)$, we have
        \[ \Imm^{\TL}_\tau(A_{\lambda/\mu,R}) = \sum_{\substack{x\in\Cov(\mathcal{N}_{\lambda/\mu,R})\\\psi(x) = \tau}} \omega(x). \]
    \end{lemma}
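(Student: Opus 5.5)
This lemma is the specialization to $\mathcal{N}_{\lambda/\mu,R}$ of the general principle in \cite{nguyen2025temperley}: Temperley--Lieb immanants of a path-weight matrix of a planar acyclic network are generating functions of colored covers, grouped by the uncrossed matching. So the plan is to check that the hypotheses of that principle hold here, and then to retrace its proof in outline.

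\textbf{Step 1: the network hypotheses.} I would first verify that $\mathcal{N}_R$ is acyclic and planar. Every edge either decreases the first coordinate $i$, or keeps $i$ fixed and moves $j$ monotonically. For a fixed $i$, only up edges or only down edges occur, according to whether $r_{i-1}$ is below or left of $r_i$, so there is no cycle. Next I would check that the sources $P_1,\dots,P_n$ and sinks $Q_1,\dots,Q_n$ lie on the boundary, at $j=1$ or $j=\infty$, in an order compatible with the indexing. This should follow from $\theta_1,\dots,\theta_\ell$ being ordered from inside out, so that the $a_i$, $b_i$ interleave as in the Lindstr\"om--Gessel--Viennot setup underlying Theorem \ref{thm:hamel-goulden}. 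Finally, $(A_{\lambda/\mu,R})_{ij}$ is the path generating function from $P_i$ to $Q_j$, as noted after the network definition.

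\textbf{Step 2: expand the immanant.} I would write
\[ \Imm^{\TL}_\tau(A)=\sum_{w\in S_n} f_\tau(w)\sum_{(\pi_1,\dots,\pi_n)}\prod_k \omega(\pi_k), \]
where $\pi_k$ runs over paths from $P_k$ to $Q_{w(k)}$. Then I would use Theorem \ref{thm:prod-minor} with $I=\nodd$ together with Corollary \ref{cor:all-type}, in the form of Kauffman/Skein-type identities of Rhoades--Skandera. The point is that $\sum_\tau \Imm^{\TL}_\tau$ equals the product of the two complementary minors. By Lindstr\"om--Gessel--Viennot, that product is exactly the weighted count of pairs (red noncrossing family, blue noncrossing family), which is the same thing as $\Cov(\mathcal{N}_{\lambda/\mu,R})$.

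\textbf{Step 3: refine by $\tau$.} The harder part is showing that the cover count splits correctly according to $\psi(x)=\tau$. I would follow the argument in \cite{nguyen2025temperley}. Fix the underlying multiset of edges $H$. The colored covers of $H$ correspond to choices at each crossing or tangency point. The uncrossing of Definition \ref{def:colored-cover} turns $H$ into a noncrossing planar diagram, and its component structure determines $\tau$. The number of colorings compatible with $(I,I)$ is $2^{\#\text{loops}}$. This matches the relation $t_i^2=2t_i$ and the expansion of $\theta(w)=\prod(t_{i_j}-1)$ summed over the crossing patterns of $H$.

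\textbf{Main obstacle.} The key difficulty is the planarity and boundary-order check in Step 1, because the degenerate behaviour of this network has to be handled. Sources or sinks can sit at $j=\infty$, and coinciding sources or sinks can occur, for example when $a_j=b_i$. The uncrossing rules (2) and (3) exist to handle these cases. Once this check is done, the lemma is a direct citation of \cite[Corollary 2.11]{nguyen2025temperley}.
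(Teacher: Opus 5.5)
Your route is the paper's. The paper gives no argument for this lemma beyond citing \cite[Corollary 2.11]{nguyen2025temperley}, so the only real content is that $\mathcal{N}_{\lambda/\mu,R}$ fits the planar-network setting of that result. Your acyclicity argument is correct, but your description of the boundary check is off in two ways.

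\emph{No boundary points coincide.} If $a_j=b_i=c$, exactly one of ``$r_{c-1}$ below $r_c$'' and ``$r_{c-1}$ left of $r_c$'' holds. So $P_i$ and $Q_j$ sit at opposite ends $(c,1)$ and $(c,\infty)$ of column $c$, and the entry $1$ is the single vertical path between them. A short argument with the skew-shape property also shows that distinct ribbons have distinct $b$'s and distinct $a$'s. Hence the second uncrossing rule never fires here. The points at $j=\infty$ are harmless: truncate at height $N$ when reading off coefficients of monomials in $x_1,\dots,x_N$.

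\emph{The condition to check is a cyclic order, not an interleaving of the $a_i,b_i$.} Read the line $j=1$ by increasing content (this is the boundary of $\mu$), then the line $j=\infty$ by decreasing content (the boundary of $\lambda$). What the cited framework needs is that on this circle the points occur cyclically as $P_1,\dots,P_n,Q_n,\dots,Q_1$. The $b_i$ need not be monotone: in Example \ref{ex:ribbon-tuple}, $\bar b=(3,5,9,6)$, and one finds $Q_1,P_1,P_2$ on $j=1$ and $P_3,P_4,Q_4,Q_3,Q_2$ on $j=\infty$. So the order depends on the top/bottom placement of each point and on the inside-out ordering of the ribbons. You assert this order but do not verify it.

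If Step 3 is meant as more than a pointer to the reference, it has two problems.

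First, it conflates two different objects. The $2^k$ choices at the $k$ double points of $H$ parametrize the path families $(\pi_1,\dots,\pi_n)$ with edge multiset $H$, each with its own permutation $w$. They do not parametrize the colored covers of $H$. The actual mechanism is that, summing $\theta(w)$ over these choices, each double point contributes $\theta(s)+1=t$. That is exactly the resolution in the third uncrossing rule, so the sum equals $2^{\#\mathrm{loops}}\,t_{\psi(H)}$ in $TL_n(2)$. Separately, the number of colored covers of $H$ is also $2^{\#\mathrm{loops}}$. This holds because the coloring of every boundary-to-boundary strand is forced, each closed loop has two colorings, and $\psi(H)$ is always compatible with $(\nodd,\nodd)$.

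Second, a needed step is missing. The expansion in your Step 2 runs over all path families, including those in which three or more paths pass through one vertex. Those contributions must cancel, and they do because $\theta$ annihilates $\sum_{\sigma\in S_3}\sigma$. This is precisely why generalized wirings forbid triple points.

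Finally, the identity for $\sum_\tau \Imm^{\TL}_\tau$ in Step 2 plays no role in the refinement by $\tau$.
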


    \begin{example}\label{ex:colored-network}

        Figure \ref{subfig:colored-network} shows a colored cover of the network in Example \ref{ex:network}. Figure \ref{subfig:colored-network-uncrossed} shows this colored cover with all intersections uncrossed and doubly covered edges contracted.

        \begin{figure}[h!]
         \centering
            \begin{subfigure}[c]{\textwidth}
                \centering
                \includegraphics[scale = 0.7]{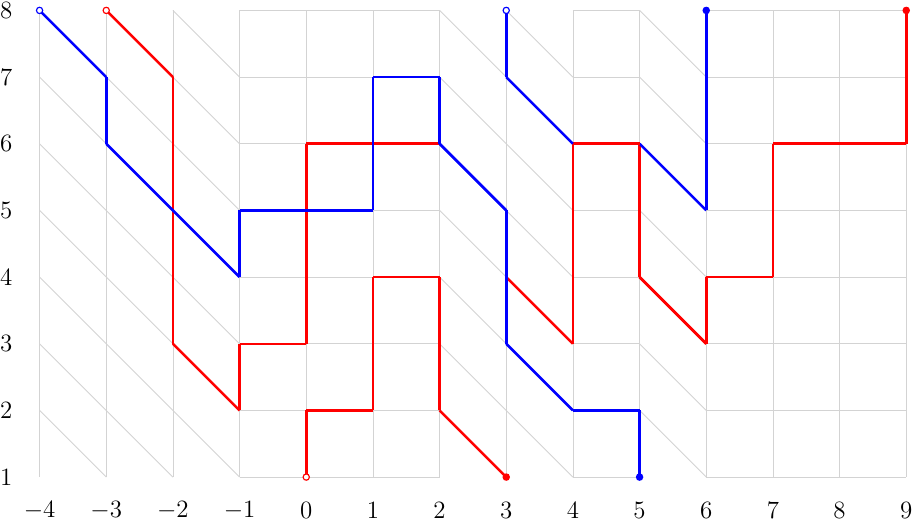}
                \caption{Colored cover}
                \label{subfig:colored-network}
            \end{subfigure}
         
         \vspace{2em}
            \begin{subfigure}[c]{\textwidth}
                \centering
                \includegraphics[scale = 0.7]{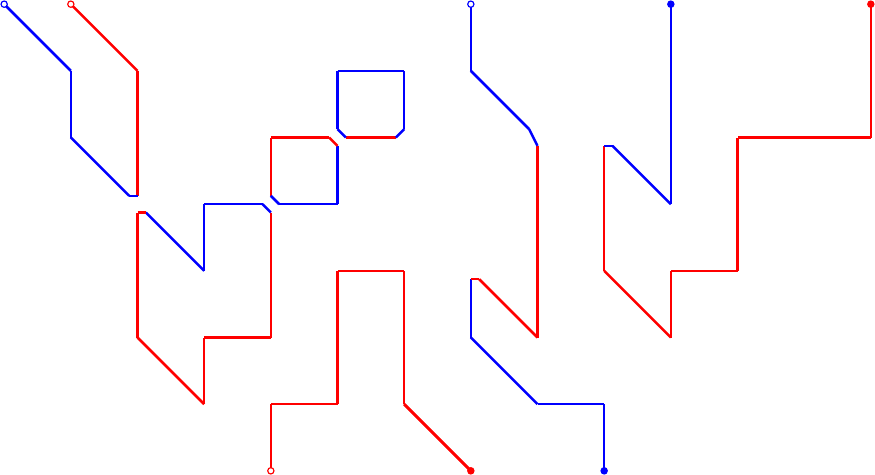}
                \caption{Colored cover uncrossed}
                \label{subfig:colored-network-uncrossed}
            \end{subfigure}
    
            \caption{}
            \label{fig:colored-network}
        \end{figure}

        Hence, the Temperley--Lieb type of this colored cover is
        \[ \resizebox{!}{0.2\textwidth}{
            \begin{tikzpicture}
                \draw (0,0) node[anchor=center] {$P_1$};
                \draw (0,-1) node[anchor=center] {$P_2$};
                \draw (0,-2) node[anchor=center] {$P_3$};
                \draw (0,-3) node[anchor=center] {$P_4$};
                \draw (3,0) node[anchor=center] {$Q_1$};
                \draw (3,-1) node[anchor=center] {$Q_2$};
                \draw (3,-2) node[anchor=center] {$Q_3$};
                \draw (3,-3) node[anchor=center] {$Q_4$};
                \filldraw[black] (0.5,0) circle (2pt);
                \filldraw[black] (0.5,-1) circle (2pt);
                \filldraw[black] (0.5,-2) circle (2pt);
                \filldraw[black] (0.5,-3) circle (2pt);
                \filldraw[black] (2.5,0) circle (2pt);
                \filldraw[black] (2.5,-1) circle (2pt);
                \filldraw[black] (2.5,-2) circle (2pt);
                \filldraw[black] (2.5,-3) circle (2pt);
                \uncross{1}{3}{3}
                \uncross{2}{2}{3}
            \end{tikzpicture}
        }. \]
    \end{example}

\subsection{Shuffle tableaux}

    Now we translate colored covers into shuffle tableaux, introduced by \cite{nguyen2025temperley}. Recall that we defined $\lambda_R/\mu_R$ and $\lambda_B/\mu_B$ such that 
    \[ s_{\lambda_R/\mu_R} = \Delta_{I,I}(A_{\lambda/\mu,R}),~\text{and}~s_{\lambda_B/\mu_B} = \Delta_{\Bar{I},\Bar{I}}(A_{\lambda/\mu,R}). \]
    For each colored cover in the previous section, the red paths together form a SSYT $T_R$ of shape $\lambda_R/\mu_B$, and the blue paths together form a SSYT $T_B$ of shape $\lambda_B/\mu_B$.

    \begin{example}\label{ex:SSYT-R-B}
        The colored cover in Example \ref{ex:colored-network} gives the following two SSYTs
        \[ T_R = \scalebox{1}{
            \textcolor{red}{\begin{ytableau}
                 \none & \none & \none & \none & 3 & 4 & 6 & 6 \\
                 \none & \none & 1 & 3 & 6 \\
                 \none & 2 & 4 & 5 \\
                 2 & 3 & 6 & 6 \\
                 7
            \end{ytableau}}}, \quad\quad\quad 
            T_B = \scalebox{1}{\textcolor{blue}{
            \begin{ytableau}
                 \none & \none & \none & 2 & 2 & 5 \\
                 \none & \none & \none & 5 & 6 & 6 \\
                 4 & 5 & 5 & 7 \\
                 5 \\
                 7
            \end{ytableau}}}. \]
    \end{example}

    \begin{definition}\label{def:shuffle}
        Given two skew shapes $\lambda_R/\mu_R$ and $\lambda_B/\mu_B$, the \textit{shuffle diagram} $D$ of shape $(\lambda_R/\mu_R)\circledast(\lambda_B/\mu_B)$ is constructed as follows.

        \begin{itemize}
            \item If $\lambda_R/\mu_R$ has a box in position $(i,j)$, then add into $D$ a box in position $(2i-1,2j-1)$.
            \item If $\lambda_B/\mu_B$ has a box in position $(i,j)$, then add into $D$ a box in position $(2i,2j)$.
        \end{itemize}
        
        A \textit{shuffle tableau} of shape $(\lambda_R/\mu_R)\circledast(\lambda_B/\mu_B)$ is a filling of the shuffle diagram of shape $(\lambda_R/\mu_R)\circledast(\lambda_B/\mu_B)$ such that the entries are weakly increasing along the rows and strictly increasing along the columns. The \textit{weight} of a shuffle tableau $T$, denoted $\omega(T)$, is $\prod_{i\geq 1} x_i^{\alpha_i}$, where $\alpha_i$ is the number of $i$-entries in $T$.
    \end{definition}

    In other words, a shuffle tableau of shape $(\lambda_R/\mu_R)\circledast(\lambda_B/\mu_B)$ is obtained by interlacing a SSYT of shape $\lambda_R/\mu_R$ and a SSYT of shape $\lambda_B/\mu_B$. Thus, there is a one-to-one correspondence from colored networks $x\in \Cov(\mathcal{N}_{\lambda/\mu,R})$ to shuffle tableaux of shape $(\lambda_R/\mu_R)\circledast(\lambda_B/\mu_B)$. Denote this map $\Phi$. Clearly, this map is weight-preserving, i.e. $\omega(x) = \omega(\Phi(x))$.

    \begin{example}\label{ex:shuffle-tableaux}
        Interlacing the two SSYTs $T_R$ and $T_B$ in Example \ref{ex:SSYT-R-B}, we get the following shuffle tableau
        \[ \begin{ytableau}
        \none & \none & \none & \none & \none & \none & \none & \none & \none & \textcolor{red}{3} & \none & \textcolor{red}{4} & \none & \textcolor{red}{6} & \none & \textcolor{red}{6} \\
        \none & \none & \none & \none & \none & \none & \textcolor{blue}{2} & \none & \textcolor{blue}{2} & \none & \textcolor{blue}{5} \\
        \none & \none & \none & \none & \none & \textcolor{red}{1} & \none & \textcolor{red}{3} & \none & \textcolor{red}{6} \\
        \none & \none & \none & \none & \none & \none & \textcolor{blue}{5} & \none & \textcolor{blue}{6} & \none & \textcolor{blue}{6} \\
        \none & \none & \none & \textcolor{red}{2} & \none & \textcolor{red}{4} & \none & \textcolor{red}{5} \\
        \textcolor{blue}{4} & \none & \textcolor{blue}{5} & \none & \textcolor{blue}{5} & \none & \textcolor{blue}{7} \\
        \none & \textcolor{red}{2} & \none & \textcolor{red}{3} & \none & \textcolor{red}{6} & \none & \textcolor{red}{6} \\
        \textcolor{blue}{5} \\
        \none & \textcolor{red}{7} \\
        \textcolor{blue}{7}
        \end{ytableau}. \]
    \end{example}

    To each shuffle tableaux $T$ we associate its {\it {Temperley--Lieb type}}, which is a non-crossing matching.  Recall that ribbons in the skew shapes $\lambda_R/\mu_R$ and $\lambda_B/\mu_B$ come from ribbons in the original skew shape $\lambda/\mu$. Hence to each ribbon in $\lambda/\mu$ we can associated the corresponding collection of cells in  $(\lambda_R/\mu_R)\circledast(\lambda_B/\mu_B)$.
    
    Next, we create the nodes that will serve as vertices of our non-crossing matching. Let $[a_k,b_k)$ be a ribbon in $\lambda/\mu$, and suppose the box $r_{a_k}$ of this ribbon corresponds to the box $(i,j)$ in $(\lambda_R/\mu_R)\circledast(\lambda_B/\mu_B)$. If box $r_{a_{k-1}}$ is below box $r_{a_k}$ in $R$, then we put $Q_k$ at coordinate $(i+1,j)$ (below $(i,j)$). Else, we put $Q_k$ at coordinate $(i,j-1)$ (left of $(i,j)$). Similarly, suppose the box $r_{b_{k-1}}$ of this ribbon corresponds to the box $(i',j')$ in $(\lambda_R/\mu_R)\circledast(\lambda_B/\mu_B)$. If box $r_{b_{k}}$ is above box $r_{b_{k-1}}$ in $R$, then we put $P_k$ at coordinate $(i'-1,j')$ (above $(i',j')$). Else, we put $P_k$ at coordinate $(i',j'+1)$ (right of $(i',j')$).

    Finally, for each pair
    \[ \begin{ytableau} 
    \none & j \\ 
    i & \none
    \end{ytableau}, \]
    if $i \leq j$, then draw two lines
    \[ \begin{tikzcd}[sep = tiny]
	{} & j \\
	i & {}
	\arrow[no head, from=2-1, to=1-1]
	\arrow[no head, from=1-2, to=2-2]
    \end{tikzcd}; \]
    if $i > j$, then draw two lines
    \[ \begin{tikzcd}[sep = tiny]
	{} & j \\
	i & {}
	\arrow[no head, from=2-1, to=2-2]
	\arrow[no head, from=1-2, to=1-1]
    \end{tikzcd}. \]
    These lines together form a noncrossing matching of the points $P_i$'s and $Q_i$'s. This is the desired Temperley--Lieb type of $T$, denoted $\psi(T)$.

    \begin{example}\label{ex:shuffle-to-TL}
        From the shuffle tableau in Example \ref{ex:shuffle-tableaux}, we get the following lines
        \[\begin{tikzcd}[sep=tiny]
        	&&&&&&&& {\textcolor{blue}{P_2}} & {\textcolor{red}{3}} & {} & {\textcolor{red}{4}} & {} & {\textcolor{red}{6}} & {} & {\textcolor{red}{6}} & {\textcolor{red}{P_3}} \\
        	&&&&& {\textcolor{red}{P_1}} & {\textcolor{blue}{2}} & {} & {\textcolor{blue}{2}} & {} & {\textcolor{blue}{5}} & {\textcolor{blue}{P_4}} \\
        	&&&&& {\textcolor{red}{1}} & {} & {\textcolor{red}{3}} & {} & {\textcolor{red}{6}} & {} \\
        	&&&&& {} & {\textcolor{blue}{5}} & {} & {\textcolor{blue}{6}} & {} & {\textcolor{blue}{6}} \\
        	&& {\textcolor{red}{Q_1}} & {\textcolor{red}{2}} & {} & {\textcolor{red}{4}} & {} & {\textcolor{red}{5}} & {\textcolor{blue}{Q_4}} \\
        	{\textcolor{blue}{4}} & {} & {\textcolor{blue}{5}} & {} & {\textcolor{blue}{5}} & {} & {\textcolor{blue}{7}} & {} \\
        	{} & {\textcolor{red}{2}} & {} & {\textcolor{red}{3}} & {} & {\textcolor{red}{6}} & {} & {\textcolor{red}{6}} \\
        	{\textcolor{blue}{5}} & {} \\
        	{} & {\textcolor{red}{7}} \\
        	{\textcolor{blue}{7}} & {\textcolor{red}{Q_3}} \\
        	{\textcolor{blue}{Q_2}}
        	\arrow[no head, from=1-10, to=1-11]
        	\arrow[no head, from=1-10, to=2-10]
        	\arrow[no head, from=1-11, to=1-12]
        	\arrow[no head, from=1-12, to=1-13]
        	\arrow[no head, from=1-14, to=1-13]
        	\arrow[no head, from=1-15, to=1-14]
        	\arrow[no head, from=1-16, to=1-15]
        	\arrow[no head, from=1-17, to=1-16]
        	\arrow[no head, from=2-6, to=3-6]
        	\arrow[no head, from=2-7, to=2-8]
        	\arrow[no head, from=2-7, to=3-7]
        	\arrow[no head, from=2-9, to=1-9]
        	\arrow[no head, from=2-9, to=2-8]
        	\arrow[no head, from=2-10, to=2-11]
        	\arrow[no head, from=2-11, to=2-12]
        	\arrow[no head, from=3-6, to=4-6]
        	\arrow[no head, from=3-8, to=3-7]
        	\arrow[no head, from=3-8, to=3-9]
        	\arrow[no head, from=3-9, to=4-9]
        	\arrow[no head, from=3-10, to=3-11]
        	\arrow[no head, from=3-10, to=4-10]
        	\arrow[no head, from=3-11, to=4-11]
        	\arrow[no head, from=4-6, to=5-6]
        	\arrow[no head, from=4-7, to=4-8]
        	\arrow[no head, from=4-7, to=5-7]
        	\arrow[no head, from=4-9, to=5-9]
        	\arrow[no head, from=4-10, to=4-11]
        	\arrow[no head, from=5-3, to=5-4]
        	\arrow[no head, from=5-4, to=5-5]
        	\arrow[no head, from=5-6, to=5-5]
        	\arrow[no head, from=5-8, to=4-8]
        	\arrow[no head, from=5-8, to=5-7]
        	\arrow[no head, from=6-1, to=6-2]
        	\arrow[no head, from=6-1, to=7-1]
        	\arrow[no head, from=6-3, to=6-4]
        	\arrow[no head, from=6-3, to=7-3]
        	\arrow[no head, from=6-4, to=7-4]
        	\arrow[no head, from=6-5, to=6-6]
        	\arrow[no head, from=6-5, to=7-5]
        	\arrow[no head, from=6-7, to=6-8]
        	\arrow[no head, from=6-7, to=7-7]
        	\arrow[no head, from=6-8, to=7-8]
        	\arrow[no head, from=7-1, to=7-2]
        	\arrow[no head, from=7-2, to=6-2]
        	\arrow[no head, from=7-4, to=7-3]
        	\arrow[no head, from=7-6, to=6-6]
        	\arrow[no head, from=7-6, to=7-5]
        	\arrow[no head, from=7-8, to=7-7]
        	\arrow[no head, from=8-1, to=8-2]
        	\arrow[no head, from=8-1, to=9-1]
        	\arrow[no head, from=8-2, to=9-2]
        	\arrow[no head, from=9-1, to=10-1]
        	\arrow[no head, from=9-2, to=10-2]
        	\arrow[no head, from=10-1, to=11-1]
        \end{tikzcd}.\]
        One can check that this gives the same Temperley--Lieb type as Example \ref{ex:colored-network}.
    \end{example}

    \begin{prop}
        Let $x$ be a colored cover of $\mathcal{N}_{\lambda/\mu,R}$, and let $T = \Phi(x)$ be the corresponding shuffle tableau. Then $x$ and $T$ have the same Temperley--Lieb type, i.e.
        \[ \psi(x) =  \psi(T). \]
    \end{prop}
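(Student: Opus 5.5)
We will reduce the statement to a local comparison between the network picture and the shuffle diagram, in the same way as in the Jacobi--Trudi case of \cite{nguyen2025temperley}. Both $\psi(x)$ and $\psi(T)$ are noncrossing matchings on the same set of labelled endpoints $P_1,\dots,P_n,Q_1,\dots,Q_n$. It therefore suffices to show one thing: two endpoints are joined by a strand in the uncrossed colored cover exactly when they are joined by a chain of segments in the shuffle picture. Our plan is to build an explicit planar embedding of the network $\mathcal{N}_R$ into the shuffle diagram, adapted to $R$.

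\textbf{Step 1: positions of path edges.} Send each horizontal or diagonal edge of a red path $\pi_k$ that records entry $t$ in box $r_c$ of $\theta_k$ to the cell of $T$ holding that entry, in the odd/odd (red) or even/even (blue) lattice. Send up and down edges to the gaps between consecutive cells of the same ribbon. A horizontal step $r_{c-1}\to r_c$ in $R$ becomes two cells two apart in a row. A vertical step becomes two cells two apart in a column. Under this identification, the portion of a red path in ribbon $\theta_k$ becomes the chain of red cells of $\theta_k$, and similarly for blue. The placement of $P_k$ and $Q_k$ just outside the first and last cells, above/right or below/left according to the direction of $R$, matches the placement of sources and sinks at height $1$ or $\infty$ in $\mathcal{N}_R$. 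This is a routine check of the four cases for the step of $R$ at $a_k$ and at $b_k$.

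\textbf{Step 2: local analysis of interactions.} Red and blue paths interact only when a red ribbon $\theta_k$ and a blue ribbon $\theta_{k\pm1}$ are adjacent. The coordinates of $\theta_k$ and $\theta_{k+1}$ differ by one diagonal shift, so in the shuffle diagram their cells sit in the diagonal configuration $\begin{smallmatrix}&j\\ i&\end{smallmatrix}$. We will show the following for a pair of cells of contents $c$ in adjacent ribbons that face each other this way. In the network, the red and blue paths at content $c$ either share a vertex or a doubly covered segment, or they are separated. Which of these happens is decided by comparing $i$ and $j$, the entries placed by the two paths at that content:
\begin{itemize}
\item if $i\le j$, the uncrossing rule in Definition~\ref{def:colored-cover} reconnects the strands vertically;
\item if $i>j$, it reconnects them horizontally.
\end{itemize}
This matches the two drawn line patterns. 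Concretely, we treat the cases by whether $r_{c-1}\to r_c$ and $r_c\to r_{c+1}$ are horizontal or vertical. In each case we read off the heights at which the two paths enter and leave content $c$, using the diagonal/horizontal/up/down edge rules. We then observe that touching, and the direction in which the doubly covered piece is contracted and split, is governed by the inequality between the two entries. When the entries are unequal, no touching means the local segments simply continue along their own ribbons; the same choice gives the same connectivity, because the paths do not meet.

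\textbf{Step 3: gluing.} Connectivity of strands is determined by these local rules together with the Step 1 identification of ribbon segments. The two graphs are therefore homeomorphic after contraction, and they have the same endpoint matching. Hence $\psi(x)=\psi(T)$.

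We expect the main obstacle to be Step 2. The ribbon $R$ changes direction, so corner contents mix diagonal and vertical edges, and in those cases the touching condition between the $i$ and $j$ paths must be verified. The entry comparison must come out as $i\le j$ versus $i>j$, with weak versus strict inequality correct. We would first verify the pure-row case (Jacobi--Trudi, known from \cite{nguyen2025temperley}) and the pure-column case. Then we would handle the two corner types by tracking the height coordinate $j$ of each path around the corner.
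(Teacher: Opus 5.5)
\textbf{Gap in Steps 2--3.} The local dictionary you propose is false even in the Jacobi--Trudi case, so the corners of $R$ are not where the difficulty lies.

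Take $R$ a row and $\lambda/\mu=(3,3)$. Then $\theta_1=[0,3)_R$ is red and $\theta_2=[-1,2)_R$ is blue. Fill the red ribbon with $5,5,5$ and the blue one with $1,1,1$. In $\mathcal N_R$ the red path is $(3,\infty)\to\cdots\to(3,5)\to(2,5)\to(1,5)\to(0,5)\to\cdots\to(0,1)=Q_1$. The blue path is $(2,\infty)\to\cdots\to(2,1)\to(1,1)\to(0,1)\to(-1,1)=Q_2$. They meet only at $(2,5)$ and at $Q_1$.

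In the shuffle tableau, however, all three facing pairs have $i=1\le 5=j$, so three vertical tiles are drawn. The middle one (blue content $0$, red content $1$) corresponds to no contact: at content $1$ the blue path (vertex $(1,1)$) lies strictly below the red one (vertex $(1,5)$). Both pictures give the matching $\{P_1P_2,\,Q_1Q_2\}$. But the shuffle picture has two closed loops and the uncrossed network at most one, so the two graphs are not homeomorphic, and Step 3 fails as stated.

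Your claim that only $\theta_k$ and $\theta_{k\pm1}$ interact also fails. Take $\lambda=(3,3,3,3)$ with red rows $5,5,5$ and $6,6,6$ and blue rows $1,1,1$ and $2,2,2$. The blue path of $\theta_4$ descends content $0$ from $\infty$ to height $2$. It shares the segment $(0,5)\to\cdots\to(0,2)$ with the red path of $\theta_1$, and the vertex $(0,6)$ with the red path of $\theta_3$.

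The root cause is Step 1. Drawing each path along its own ribbon in the shuffle diagram is not an embedding of the union of paths. A vertex of $\mathcal N_R$ used by two paths is sent to two different places, and those shared vertices are exactly where the uncrossing happens.

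\textbf{What is needed.} You need an argument that the two \emph{matchings} agree even though the diagrams do not. In the row case, a vertical tile between $\theta_k$ and $\theta_{k+1}$ at content $c$ records that the path of $\theta_{k+1}$ is not strictly on its standard side of the path of $\theta_k$ there: they touch \emph{or} are inverted. So the shuffle picture is a brick-wall product of generators $t_k$. One must then show that the uncrossed network gives the same basis element up to a power of $2$. One possible route is to sweep the network content by content and use $t_k^2=2t_k$, $t_kt_{k\pm1}t_k=t_k$ and far commutativity. These relations would absorb both the repeated tiles along an inverted stretch and the contacts between non-adjacent ribbons.

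The paper itself only invokes \cite[Proposition 3.8]{nguyen2025temperley} at this point. Whatever route is taken, a global comparison of this kind has to replace your Steps 2 and 3, and for general $R$ it must also be carried through the corners.
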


    \begin{proof}
        The proof is verbatim to that of  \cite[Proposition 3.8]{nguyen2025temperley}.
    \end{proof}

    \begin{cor}\label{cor:TL-compute}
        For any basis element $\tau$ of $TL_n(2)$, we have
        \[ \Imm^{\TL}_\tau(A_{\lambda/\mu,R}) = \sum_{\substack{\text{$T$ of shape $(\lambda_R/\mu_R)\circledast(\lambda_B/\mu_B)$} \\ \psi(T) = \tau}} \omega(T). \]
    \end{cor}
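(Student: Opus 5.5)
This follows by composing the two bijective, weight-compatible identifications already in hand. I would start from the preceding lemma (\cite[Corollary 2.11]{nguyen2025temperley}), which writes
\[ \Imm^{\TL}_\tau(A_{\lambda/\mu,R}) = \sum_{\substack{x\in\Cov(\mathcal{N}_{\lambda/\mu,R})\\ \psi(x)=\tau}} \omega(x). \]
I would then reindex the sum through the map $\Phi$ from colored covers to shuffle tableaux of shape $(\lambda_R/\mu_R)\circledast(\lambda_B/\mu_B)$.

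The first point to check is that $\Phi$ is a bijection. A colored cover is exactly a pair consisting of a red noncrossing family and a blue noncrossing family. By the Lindstr\"om--Gessel--Viennot correspondence for $\mathcal{N}_R$ (applied to the subnetworks with sources and sinks indexed by $I$ and $\Bar{I}$, using Proposition \ref{prop:minors} to identify these with the networks of $\lambda_R/\mu_R$ and $\lambda_B/\mu_B$), such pairs are the same as pairs $(T_R,T_B)$ of SSYTs of these shapes. Interlacing is clearly invertible.

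One subtlety needs a short argument: the union of a red and a blue family must really be a generalized wiring. No three paths can share a vertex, since each family is noncrossing and so contributes at most one path through any vertex. Conversely, any pair of families yields a colored cover of the union wiring $H$, and $H$ is then determined by the pair.

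The second point is weight preservation, $\omega(\Phi(x))=\omega(x)$. This holds because the weighted (horizontal and diagonal) edges of each path record exactly the entries of the corresponding ribbon, with doubly covered edges counted once per color.

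Finally, the preceding proposition gives $\psi(x)=\psi(\Phi(x))$. Substituting $T=\Phi(x)$ in the sum then turns the condition $\psi(x)=\tau$ into $\psi(T)=\tau$ and yields the corollary. The only step requiring any care is the bijectivity, specifically that the shuffle-tableau conditions impose no constraint linking $T_R$ and $T_B$, and indeed Definition \ref{def:shuffle} imposes none.
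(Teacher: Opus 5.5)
Your proposal is correct and follows the paper's own route. The paper obtains the corollary by pushing the sum in the cited lemma through the weight-preserving bijection $\Phi$ and applying the preceding proposition $\psi(x)=\psi(\Phi(x))$. Your additional checks — that the union of the red and blue vertex-disjoint families is a generalized wiring, and that the shuffle-tableau conditions decouple the two colors — spell out what the paper simply asserts as clear.
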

    
\subsection{Crystal operators}

    Now we explain how crystal operators act on shuffle tableaux, as defined in \cite{nguyen2025temperley}.

    Given a shuffle tableau $T$, an $(i,i+1)$-overlap is a pair of squares $(s,t)$ such that $s$ contains an $i$, $t$ contains an $i+1$, and $s$ and $t$ are on the same column. If $(s,t)$ is an $(i,i+1)$-overlap pair, we say that $s$ and $t$ are $(i,i+1)$-overlapped. 
    
    \begin{example}\label{exp:overlap}
        In the following tableau $T$,
        \[ \begin{ytableau}    
        \none & \none & \none & \none & \textcolor{red}{1} & \none & \textcolor{red}{1} & \none & \textcolor{red}{1} & \none & 1 & \none & 2 \\ 
        \none & \none & \none & \textcolor{red}{1} & \none & \textcolor{blue}{2} & \none & 3 & \none & 3 & \none & 3 \\ 
        \none & \none & \textcolor{red}{1} & \none & \textcolor{red}{2} & \none & \textcolor{red}{2} & \none & \textcolor{red}{2} & \none & 3 \\ 
        \none & 2 & \none & \textcolor{red}{2} & \none & \textcolor{blue}{3} \\ 
        2 & \none & \textcolor{red}{2}
        \end{ytableau}, \]
        there are five pairs $(1,2)$-overlap (in red), and one pair of $(2,3)$-overlap (in blue).
    \end{example}

    Next, we define the $i$-reading word $w_i(T)$ as follows.
    \begin{enumerate}
        \item Consider the squares consisting of $i$ and $i+1$.
        \item Remove all $(i,i+1)$-overlap pairs.
        \item Iteratively read the remaining squares from bottom to top, left to right.
    \end{enumerate}

    \begin{example}\label{exp:reading-word}
        In the tableau in Example \ref{exp:overlap}, we have
        \[ w_1(T) = 2~|~2~|~|~2~|~1~2 \]
        by reading all non-red $1$s and $2$s, and
        \[ w_2(T) = 2~2~|~2~2~|~2~2~2~3~|~3~3~3~|~2 \]
        by reading all non-blue $2$s and $3$s.
    \end{example}

    \begin{definition}[{\cite[Definition 4.3]{nguyen2025temperley}}]\label{def:crys-ops}
        The \textit{crystal operators} $E_i$ and $F_i$ act on $w_i(T)$ in the following (standard) way. 
        \begin{itemize}
            \item View each $i$ as a closing parenthesis ``)'' and each $i+1$ as an opening parenthesis ``(''.
            \item Match the parentheses in the usual way.
            \item $E_i$ changes the leftmost unmatched $i+1$ to $i$, and $F_i$ changes the rightmost unmatched $i$ to $i+1$.
        \end{itemize}

        We induce this action of the crystal operators to the action on shuffle tableaux as follows: 
        \begin{itemize}
            \item the shape of the shuffle tableaux is preserved;
            \item The content changes in the way uniquely determined by $w_i(E_i(T)) = E_i(w_i(T))$ and $w_i(F_i(T)) = F_i(w_i(T))$.
        \end{itemize} 

        A \textit{Temperley--Lieb crystal} is a connected component of the graph on shuffle tableaux formed by the crystal operators.
   \end{definition}

   It was proved in \cite{nguyen2025temperley} that these cyrstal operators $E_i, F_i$ satisfy Stembridge's axioms and do not change the Temperley--Lieb type.

   \begin{prop}[{\cite[Proposition 4.7]{nguyen2025temperley}}]\label{prop:type-preserve}
       $E_i$ and $F_i$ do not change the Temperley--Lieb type. That is,
        \[ \psi(T) = \psi(E_iT) = \psi(F_iT). \]
   \end{prop}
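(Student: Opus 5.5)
The plan is to reduce the statement to a local check around the single cell whose entry changes, following \cite[Proposition 4.7]{nguyen2025temperley}. The Temperley--Lieb type $\psi(T)$ depends on two things. The positions of the nodes $P_k,Q_k$ depend only on the shape $(\lambda_R/\mu_R)\circledast(\lambda_B/\mu_B)$ and on $R$. The remaining input is the outcome of the comparisons $i\le j$ versus $i>j$ over the diagonally adjacent pairs, where $i$ sits in the lower-left cell and $j$ in the upper-right cell. The ribbon $R$ only affects where the boundary nodes go, and $E_i,F_i$ preserve the shape. So it suffices to show that applying $F_i$ changes no diagonal comparison in a way that alters the resulting matching; $E_i$ then follows since it is the partial inverse of $F_i$.

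Suppose $F_i$ changes the cell $s$ from $i$ to $i+1$. By construction $s$ is an unmatched $i$ in $w_i(T)$, in particular not part of an $(i,i+1)$-overlap, and it is the rightmost such $i$. A comparison involving $s$ flips in exactly two situations:
\begin{itemize}
\item[(a)] the diagonal neighbor $b$ up-right of $s$ contains $i$, so that $s\le b$ holds before and fails after;
\item[(b)] the diagonal neighbor $a$ down-left of $s$ contains $i+1$, so that $a\le s$ fails before and holds after.
\end{itemize}
All other diagonal pairs are untouched.

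In each case I would argue as follows. Suppose case (a) occurs. Then $b$ is read after $s$ in $w_i(T)$, or it lies in an overlap. Since $s$ is the rightmost unmatched $i$, the cell $b$ is either overlapped or matched to some $i+1$ read between $s$ and $b$. I would use the column-strictness and row-weakness of the red and blue SSYTs in the interlaced picture to locate that $i+1$ in the cell directly above $s$ or directly right of $b$ (two steps away in the same color). In each sub-case I would then show one of two things: either $s$ would itself be overlapped or matched, a contradiction, or the flip of the pair $(s,b)$ is accompanied by a compensating flip in an adjacent pair, so that the local picture merely reroutes strands without changing which endpoints are joined. Case (b) is handled symmetrically, using that an $i+1$ read before $s$ must be matched, since $s$ is unmatched.

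The main obstacle is this case analysis: pinning down, via semistandardness of the two interlaced colors, exactly which cells in the $2\times 2$ or $3\times 3$ neighborhood of $s$ can contain $i$ or $i+1$. The delicate part is making sure that the overlap-removal step in the definition of $w_i$ interacts correctly with the parenthesis matching. Once the local configurations are listed, checking that the two possible line drawings give the same noncrossing matching is a finite picture check.
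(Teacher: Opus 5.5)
Your reduction is right. The shape and the positions of $P_k,Q_k$ do not change, so only the two diagonal comparisons involving the changed cell $s$ can change, and only in your situations (a) and (b). The gap is in how you dispose of these.

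\textbf{The dichotomy fails.} Your dichotomy is: either $s$ is forced to be overlapped or matched, or a compensating second flip occurs. This is false. Take $R$ a row and $\lambda/\mu=(3,3)/(1,1)$, so the red tableau is the top row and the blue tableau the bottom row. Fill red $=(i,i)$ and blue $=(i,i+1)$.
\begin{itemize}
\item Then $w_i(T)=i\,(i{+}1)\,i\,i$. The blue $i+1$ is matched with the first red $i$, and $F_i$ changes the second red $i$.
\item Its down-left neighbour is the blue $i+1$, so that comparison flips (your case (b)). It has no up-right neighbour, so nothing else changes, and $s$ is neither overlapped nor matched.
\item The type is $t_1$ both before and after. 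This holds only because, outside the flipped block, the blue $i+1$ is already joined to the empty position just left of $s$ by a strand through the first red cell, its bracket partner. The flip merely pinches off a closed loop.
\item The reverse step (red $(i,i+1)$, blue $(i,i+1)$, with $F_i$ changing the blue $i$) is a single case-(a) flip that dissolves such a loop.
\end{itemize}

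\textbf{What actually has to be proved} is a connectivity statement. A single block flip preserves the matching exactly when two of the four outer ends of that block are already joined to each other outside it. If the four ends run to four distinct boundary points, the flip does change the matching. Your plan never establishes this.

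It also cannot be a finite picture check in a $3\times 3$ window. The joining strand passes through the bracket partner of $a$ (resp.\ $b$), which can be far from $s$ when brackets nest. For example, take the $2\times 3$ rectangle $(4,4)/(1,1)$ with red $(i,i,i)$ and blue $(i,i+1,i+1)$. Then $F_i$ again flips only the comparison at the last red cell. The partner of $a$ is two red cells to the left of $s$, and the pinched-off loop runs through both red cells to the left of $s$. You therefore need an argument relating the bracket matching in $w_i(T)$, including overlap removal, to the connectivity of strands.

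\textbf{Partner of $b$.} Separately, your location of $b$'s partner is wrong. In the row reading order (rows bottom to top, each row left to right), the cells read between $s$ and $b$ are those to the right of $s$ in its row and those to the left of $b$ in its row. The cell directly above $s$ is $<i$ and is read after $b$. The cell directly right of $b$ is also read after $b$. So the partner must lie to the right of $s$ in its row.
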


   \begin{thm}[{\cite[Theorem 5.1]{nguyen2025temperley}}]\label{thm:type-A}
        Temperley--Lieb crystals are type A Kashiwara crystals.
    \end{thm}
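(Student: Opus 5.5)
The plan is to verify Stembridge's local axioms directly for the operators $E_i,F_i$ of Definition~\ref{def:crys-ops}. Stembridge's theorem then says that every connected component of a graph satisfying these axioms, with the natural weight $\operatorname{wt}(T)$, is the crystal graph of an irreducible $\mathfrak{gl}_N$-module, i.e.\ a type A Kashiwara crystal. The work therefore splits into a well-definedness step, the single-colour axioms, and the two-colour (Stembridge) axioms.

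\emph{Well-definedness.} First I check that changing the leftmost unmatched $i+1$ in $w_i(T)$ to $i$ (and dually for $F_i$) produces a valid shuffle tableau. The changed square $s$ is not part of an $(i,i+1)$-overlap, so its column contains no $i$ adjacent in the relevant sense. Row weakness must also survive. The usual bracketing argument gives this: every entry to the left of $s$ in the reading order that could violate row weakness would be an unmatched $i+1$ read earlier, which contradicts leftmost-ness. Shuffle rows are interlaced red/blue, so "same row" means same colour. Cells of the other colour sit diagonally and only affect column strictness through overlaps, and by construction overlaps were removed before reading.

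I also need the overlap set itself to be unchanged by $E_i$. Otherwise $w_i(E_iT)$ would not simply be $E_i(w_i(T))$, and $E_iF_i=\mathrm{id}$ could fail. Once this invariance holds, the axioms involving only one index ($E_i$ and $F_i$ are partial inverses, and the string lengths $\varepsilon_i,\varphi_i$ satisfy $\varphi_i-\varepsilon_i=\langle \operatorname{wt},\alpha_i\rangle$) follow at once from the word-level statements. The reason is that the overlap pairs contribute equally many $i$'s and $i+1$'s to the weight.

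\emph{Two-colour axioms.} For $|i-j|\ge 2$ the letters involved are disjoint, so $E_i$ and $E_j$ commute and $\varepsilon_j$ is unchanged by $E_i$. The substantive case is $j=i+1$. Here I need the standard dichotomy: applying $E_i$ changes $\varepsilon_{i+1}$ either by $0$ or by $+1$ (with the matching statement for $\varphi_{i+1}$), and when both changes are $0$ the operators commute, while in the other case one has $E_iE_{i+1}^2E_i=E_{i+1}E_i^2E_{i+1}$. My approach is to reduce this to the known word crystal. I restrict $T$ to its entries $i,i+1,i+2$ and build a single word $w$ that simultaneously encodes $w_i$ and $w_{i+1}$. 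An $(i,i+1)$-overlap pair is a "frozen" column-domino $i/i+1$, which contributes nothing to $w_i$. Its $i+1$ can still be seen by $w_{i+1}$ unless it is itself in an $(i+1,i+2)$-overlap.

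The main obstacle is exactly here: the two reading words use different sets of removed squares. So $E_i$ can move an $i+1$ into or out of an $(i+1,i+2)$-overlap, and this changes $w_{i+1}$ by more than a single letter substitution. To handle it I will do a local case analysis on columns containing $\{i,i+1,i+2\}$ (in a shuffle tableau a column holds alternating colours, so only a few configurations occur). In each case I show that an overlap domino behaves in $w_{i+1}$ like a matched pair that is inserted or deleted at a reading position where it does not alter the bracketing of other letters. Under this identification $w_i$ and $w_{i+1}$ are the $i$- and $(i+1)$-signature words of one ordinary word up to matched-pair insertions, and the Stembridge relations can be imported from the tensor product crystal of words. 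If the domino bookkeeping becomes unwieldy, my fallback is to check the finitely many local configurations of the relevant squares by hand, which is what Stembridge's axioms are designed to permit.
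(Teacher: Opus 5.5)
\textbf{Verdict: genuine gap.} Your framework, checking Stembridge's local axioms, is the one behind the cited result, and the easy parts of your sketch are sound. $E_i$ and $F_i$ do not change the set of $(i,i+1)$-overlaps, so the single-index axioms follow from the word crystal on $w_i$. For $|i-j|\ge 2$ the word $w_j$ is not touched at all. (For row-weakness you also need that an overlapped $i+1$ never lies to the left of a free $i+1$ in the same row. This holds because, in a skew shape, the cell above the right one exists and is squeezed between $i$ and $i+1$.)

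The gap is the adjacent case $j=i\pm1$, which carries all the content of the theorem, and there your proposed mechanism fails. First a correction: red cells occupy odd rows and odd columns and blue cells even ones, so every row and every column of a shuffle diagram is monochromatic. Overlaps are one-colour vertical dominoes, and the colours interact \emph{only} through the interleaving of red and blue rows in the reading order. That interleaving is exactly what stops a domino from acting as a self-contained matched pair.

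Take $i=1$, $T_R$ of shape $(2,2)/(1,1)$ with column $2$ over $3$, and $T_B$ of shape $(1)$ with entry $2$. From top to bottom the diagram has a red $2$, a blue $2$, a red $3$, and the red $2/3$ is a $(2,3)$-overlap. Then $w_1=2_B2_R$ and $w_2=2_B$, so $E_1$ and $F_2$ both act on the blue $2$.
\begin{itemize}
\item Keeping the domino at its reading positions gives $3_R2_B2_R$. The red $3$ brackets with the blue $2$, and $f_2$ would change the red $2$, creating a column $3/3$.
\item Gluing the domino at its lower cell gives $3_R2_R2_B$, where $e_1$ would change the red $2$.
\item Gluing it at its upper cell, $2_B3_R2_R$, works here but fails once a domino is destroyed. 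Take $T_R$ with rows $1\,2$ and $3\,3$ and the same $T_B$. Then $w_1=2_B1_R2_R$, so $E_1$ turns the red $2$ into $1$ and frees the red $3$ beneath it. Now $w_2(E_1T)=3_R3_R2_B$ and $E_2$ changes the \emph{left} red $3$. In the glued word the freed $3$ stays after $2_B$, the $\{2,3\}$-part becomes $3_R2_B3_R$, and $e_2$ would change the right red $3$, producing the row $3\,2$.
\end{itemize}

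So you have not exhibited, and there is no evident, single word on which $E_i,E_{i+1}$ act letter by letter and which is carried along by the operators. Matched-pair insertions preserve $\varepsilon$ and $\varphi$ but not \emph{which} letter moves, and (P5)/(P6) in Stembridge's numbering are statements about the resulting tableaux.

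What is missing is a direct argument for the adjacent axioms that controls how the operators change the other index's dominoes. $E_i$ and $F_{i+1}$ destroy them, while $F_i$ and $E_{i+1}$ create them. In each case a letter enters or leaves the other reading word at its own row position, on the far side of an interleaved row of the opposite colour. One must show:
\begin{itemize}
\item $\varepsilon_{i+1}(E_iT)-\varepsilon_{i+1}(T)\in\{0,1\}$;
\item $E_i$ and $E_{i+1}$ commute whenever this difference is $0$, regardless of the other difference (your ``both changes vanish'' misstates this);
\item $E_iE_{i+1}^2E_iT=E_{i+1}E_i^2E_{i+1}T$ when both differences are $1$;
\item the accompanying $\varphi$-conditions and the dual statements for $F$.
\end{itemize}
This is precisely what the cited Theorem~5.1 supplies. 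Your fallback of checking ``finitely many local configurations of squares'' does not replace it. Which cell $E_i$ changes is governed by the global bracketing of $w_i$, and the bracket partner of a freed letter can be arbitrarily far away in the reading word.
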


    Combining Corollary \ref{cor:TL-compute}, Proposition \ref{prop:type-preserve}, and Theorem \ref{thm:type-A}, we have Theorem \ref{thm:main-thm}.

    \begin{proof}[Proof of Theorem \ref{thm:main-thm}]
        Corollary \ref{cor:TL-compute} means that one can compute $\Imm_\tau^{\TL}(A_{\lambda/\mu,R})$ by summing over shuffle tableaux of the corresponding shape and Temperley--Lieb type $\tau$. Theorem \ref{thm:type-A} means that we can split the set of all shuffle tableaux into Temperley--Lieb crystals, and Proposition \ref{prop:type-preserve} implies that every shuffle tableau in each crystal has the same Temperley--Lieb type. Thus, each Temperley--Lieb immanant is a sum of some Temperley--Lieb crystals and hence is Schur positive.
    \end{proof}

    \begin{remark}
        To obtain the Littlewood--Richardson coefficients, i.e. $\langle \Imm^{\TL}_\tau(A_{\lambda/\mu,R}), s_\nu \rangle$, one only needs to count \textit{Yamanouchi shuffle tableaux}, which are shuffle tableaux on which no crystal operator $E_i$ can be applied. An efficient way to generate Yamanouchi shuffle tableaux is via peelable tableaux in \cite{nguyen2025shuffle}.
    \end{remark}

\section*{Acknowledgement}

    We thank Nhi Dang for explaining to us the details about Hamel--Goulden matrices and creating the software to generate them. After the completion of this work, we were informed that Angele Foley, Alejandro Morales, and Daniel Soskin had independently obtained the same result. This work was supported by a grant from Simons Foundation International SFI-MPS-SFM-00011393 P.P.

\bibliography{bibliography}
\bibliographystyle{alpha}

\end{document}